\newtheorem{theorem}{Theorem}[section]
\newtheorem{lemma}[theorem]{Lemma}
\newtheorem{proposition}[theorem]{Proposition}
\newtheorem{corollary}{Corollary}[theorem]
\newtheorem{definition}{Definition}[section]
\newtheorem{remark}{Remark}[section]
\newtheorem{notation}{Notation}[section]
\title{Topologies on sets of polynomial knots and the homotopy types of the respective spaces}
\author{Hitesh Raundal\\{\it\small Harish-Chandra Research Institute, Chhatnag Road, Jhunsi, Prayagraj 211019, India}\\{\it\small hiteshrndl@gmail.com}}
\date{}
\begin{document}
\maketitle

\begin{abstract}
{A polynomial knot in $\mathbb{R}^n$ is a smooth embedding of $\mathbb{R}$ in $\mathbb{R}^n$ such that the component functions are real polynomials. In the earlier paper with Mishra, we have studied the space $\mathcal{P}$ of polynomial knots in $\mathbb{R}^3$ with the inductive limit topology coming from the spaces $\mathcal{O}_d$ for $d\geq3$, where $\mathcal{O}_d$ is the space of polynomial knots in $\mathbb{R}^3$ with degree $d$ and having some conditions on the degrees of the component polynomials. In the same paper, we have proved that the space of polynomial knots in $\mathbb{R}^3$ has the same homotopy type as $S^2$. The homotopy type of the space is the mere consequence of the topology chosen. If we have another topology on $\mathcal{P}$, the homotopy type may change. With this in mind, we consider in general the set $\mathcal{L}^n$ of polynomial knots in $\mathbb{R}^n$ with various topologies on it and study the homotopy type of the respective spaces. Let $\mathcal{L}$ be the union of the sets $\mathcal{L}^n$ for $n\geq1$. We also explore the homotopy type of the space $\mathcal{L}$ with some natural topologies on it.\\[7pt]
{\it Keywords:} Polynomial knot, Topologies on a set, Homotopy type of a space.\\[7pt]
{\it MSC 2010:} Primary: 57M25, 57R40; Secondary: 54A10, 55P10, 55P15.}
\end{abstract}

\section{Introduction}\label{sec1}
 
The space of knots as embeddings of $S^1$ in $S^3$ with some specific behavior at the north pole can be approximated by the space of embeddings of $\mathbb{R}$ into $\mathbb{R}^n$ of the form
\begin{equation*}
t\mapsto\left(t^d+a_1^1t^{d-1}+\cdots+a_{d-1}^1t,\hskip0.2mm t^d+a_1^2t^{d-1}+\cdots+a_{d-1}^2t,\hskip0.2mm\cdots,\hskip0.2mm t^d+a_1^nt^{d-1}+\cdots+a_{d-1}^nt\right),
\end{equation*}
\noindent for example see \cite{bl, vav1,vav2}. Let $\mathcal{K}_d^n$ be the space of maps from $\mathbb{R}$ to $\mathbb{R}^n$ of the form above and let $\Sigma_d^n$ be the discriminant of this space. In \cite{vav2}, Vassiliev discussed and proved the following result:
\begin{theorem}\cite[Theorem 1]{vav2}
For $n\geq2$ the space $\mathcal{K}_3^n\setminus\Sigma_3^n$ is contractible and the space $\mathcal{K}_4^n\setminus\Sigma_4^n$ is homology equivalent to $S^{n-2}$. For $n\geq4$ the space $\mathcal{K}_4^n\setminus\Sigma_4^n$ is also homotopy equivalent to $S^{n-2}$. The generator of the group $H^{n-2}\left(\mathcal{K}_4^n\setminus\Sigma_4^n\right)$ is equal to the linking number in $\mathcal{K}_4^n$ with the set of maps above with singular points.
\end{theorem}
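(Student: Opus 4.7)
My plan is to follow Vassiliev's general method for studying complements of discriminants in affine parameter spaces. I identify $\mathcal{K}_d^n$ with the real vector space $\mathbb{R}^{n(d-1)}$ (coordinates the coefficients $a_j^i$) and describe $\Sigma_d^n$ as the real semi-algebraic set of those $f$ that are either non-injective (there exist $t_1<t_2$ with $f(t_1)=f(t_2)$) or non-immersive ($f'(t)=0$ for some $t$). Since $\mathcal{K}_d^n$ is contractible, Alexander duality converts information about $\mathcal{K}_d^n\setminus\Sigma_d^n$ into Borel--Moore homology of $\Sigma_d^n$, which is what I would ultimately compute by means of a simplicial resolution.

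For $d=3$ the discriminant has an explicit linear-in-coefficients description: $(A_1,A_2)\in\mathbb{R}^n\oplus\mathbb{R}^n$ lies in $\Sigma_3^n$ iff there is $(\lambda,\mu)\in\mathbb{R}^2$ with $\mu\geq 3\lambda^2/4$ and $A_2=-\mu\mathbf{1}-\lambda A_1$, where $\mathbf{1}=(1,\dots,1)$ and $\lambda=t_1+t_2$, $\mu=t_1^2+t_1t_2+t_2^2$. Contractibility of the complement can be proved either by running the simplicial-resolution programme in this easier case (the resolution is small because a cubic map carries only boundedly many coincidence pairs, and one verifies vanishing of the reduced Borel--Moore homology of $\Sigma_3^n$ directly), or by building an explicit deformation retraction onto a base point such as $f_0(t)=(t^3,t^3+t,t^3,\dots,t^3)$, moving coefficients along paths that stay on the non-forbidden side of the parabolic slice.

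For $d=4$ the complement has nontrivial topology, and I would construct Vassiliev's simplicial resolution $\widetilde\Sigma_4^n\to\Sigma_4^n$ by attaching a $(k-1)$-simplex over every point of the stratum of maps with exactly $k$ unordered coincidence pairs $\{t_1,t_2\}$, where a pair with $t_1=t_2$ corresponds to a singular point. The natural filtration by the number of coincidence pairs yields a spectral sequence converging to $H_*^{\mathrm{BM}}(\Sigma_4^n)$, whose $E^1$-page is assembled from (co)homology of configuration spaces of coincidence pairs in $\mathbb{R}$ twisted by sign local systems coming from the orderings of the $n$ coordinates and of the two parameters within each pair. Since configuration spaces of ordered points in $\mathbb{R}$ are contractible and a degree-four map carries only boundedly many coincidence pairs simultaneously, I expect the spectral sequence to collapse quickly onto a single surviving class in total degree $n-2$; applying Alexander duality in $\mathbb{R}^{3n}$ then gives $\widetilde H^*(\mathcal{K}_4^n\setminus\Sigma_4^n)\cong\widetilde H^*(S^{n-2})$. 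The main obstacle I foresee is precisely this spectral-sequence bookkeeping: tracking orientation signs from the orderings of the $n$ coordinates and of $(t_1,t_2)$, and verifying that contributions from strata with $k\geq 2$ pairs cancel in the surviving total degree.

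To upgrade homology to homotopy equivalence when $n\geq 4$, I would first show that $\mathcal{K}_4^n\setminus\Sigma_4^n$ is simply connected. For $n\geq 5$ this follows from transversality, since the top stratum of $\Sigma_4^n$ has real codimension $n-2\geq 3$, so loops and their null-homotopies can be pushed off the discriminant. The borderline case $n=4$ requires a Zariski--van Kampen-type analysis of the stratified real variety $\Sigma_4^4$ to show that the single meridional generator of $\pi_1$ is already trivial. Given simple connectivity together with the homology calculation, Hurewicz produces a generator of $\pi_{n-2}$, a representing map $\mathcal{K}_4^n\setminus\Sigma_4^n\to S^{n-2}$ is then a homology equivalence between simply-connected spaces of the homotopy type of a CW complex, and Whitehead's theorem promotes it to a homotopy equivalence. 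Finally, to identify the generator of $H^{n-2}$ with the linking number, I would single out the codimension-$(n-1)$ smooth stratum of $\Sigma_4^n$ consisting of maps with a simple singular point $f'(t)=0$, define the linking cocycle by the signed transverse-intersection count of any $(n-1)$-chain bounding a given cycle with this $(2n+1)$-dimensional stratum, and verify on a meridional $(n-2)$-sphere around a smooth point of the singular stratum that the count equals $\pm 1$, thereby identifying the resulting class as the stated generator.
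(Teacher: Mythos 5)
This statement is not proved in the paper at all: it is quoted verbatim from Vassiliev \cite[Theorem 1]{vav2} as background motivation for studying spaces of polynomial knots, so there is no internal proof to compare your attempt against. Your sketch does follow the broad outline of Vassiliev's discriminant method (coefficient space $\mathbb{R}^{n(d-1)}$, Alexander duality against the Borel--Moore homology of $\Sigma_d^n$, a simplicial resolution filtered by the number of coincidence pairs), so the strategy matches the cited source.

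As a proof, however, your proposal has genuine gaps, and they sit exactly where the content of the theorem lies. First, the entire homology computation for $d=4$ is deferred: you ``expect the spectral sequence to collapse quickly onto a single surviving class in total degree $n-2$,'' but you never write down the $E^1$ page, the sign local systems, or the cancellations among strata with $k\geq2$ coincidence pairs; without this, the claim that $\mathcal{K}_4^n\setminus\Sigma_4^n$ is homology equivalent to $S^{n-2}$ is an announcement, not an argument. Second, in the homotopy upgrade the borderline case $n=4$ is precisely the one that general position cannot dispatch (the top stratum of $\Sigma_4^4$ has real codimension $2$), and ``a Zariski--van Kampen-type analysis'' showing the meridional generator of $\pi_1$ dies is a placeholder for the hardest step, not a proof of it. Your Whitehead step also needs minor repair: Hurewicz produces a map $S^{n-2}\to\mathcal{K}_4^n\setminus\Sigma_4^n$, not a map in the direction you wrote, and you must check that its image generates $H_{n-2}$. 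Third, the $d=3$ contractibility is offered as a choice between two unexecuted options. If you intend this as a genuine proof rather than a reading guide to \cite{vav2}, each of these items has to be carried out in full.
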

\noindent This result is about investigating the homology and homotopy of some specific kind of space of embeddings of $\mathbb{R}$ in $\mathbb{R}^n$ whose component functions are real polynomials. With this motivation, Durfee and O'Shea \cite{do} introduced the space $\mathcal{M}_d^n\setminus\Sigma_d^n$, where $\mathcal{M}_d^n$ is the space of maps from $\mathbb{R}$ to $\mathbb{R}^n$ whose component functions are real polynomials of degree at most $d$ (and one of them is exactly equal to $d$), and $\Sigma_d^n$ is the discriminant of the space $\mathcal{M}_d^n$. If two members of the space $\mathcal{M}_d^n\setminus\Sigma_d^n$ are path equivalent, then they are topologically equivalent (i.e. if two members belong to the same path component of the space $\mathcal{M}_d^n\setminus\Sigma_d^n$, then their extensions as embeddings of $S^1$ in $S^3$ are ambient isotopic), see \cite{do}. {\it A smooth embedding of $\mathbb{R}$ in $\mathbb{R}^n$ given by $t\mapsto\left(f_1(t),f_2(t),\ldots,f_n(t)\right)$, where $f_1,f_2,\ldots,f_n$ are real polynomials, is referred as polynomial knot.} Note that the spaces $\mathcal{K}_d^n\setminus\Sigma_d^n$ and $\mathcal{M}_d^n\setminus\Sigma_d^n$ are spaces of polynomial knots of degree $d$ with some specific conditions on the component polynomials.

Using the Weierstrass approximation theorem, Shatri \cite{ars} proved that every knot has polynomial representation. That is, for a given knot as an embedding of $S^1$ in $S^3$, there exists a polynomial knot $\phi:\mathbb{R}\hookrightarrow\mathbb{R}^n$ whose extension is ambient isotopic to the original one. Furthermore, it has been proved that if two polynomial knots represent the same knot-type, then there exists a polynomial isotopy $F:[0,1]\times\mathbb{R}\to\mathbb{R}^3$ which connects them, see \cite[Theorem 2.3]{rs}. If we have a long knot as an embedding of $\mathbb{R}$ in $\mathbb{R}^3$, then it can be rolled to the infinity to make it into a straight line. In other words, for a long knot $\phi$, there exists an isotopy $H:[0,1]\times\mathbb{R}\to\mathbb{R}^3$ of long knots that connects $\phi$ to an unknot (i.e. $H_s:\mathbb{R}\to\mathbb{R}^3$ given by $t\mapsto H(s,t)$ is a long knot for $s\in[0,1]$, $H_0=\phi$ and $H_1$ is an unknot). Since a polynomial knot is also a long knot, there exists an isotopy of long knots (i.e. an isotopy of the type $H$ above) connecting a given polynomial knot to an unknot. Here, it is natural to ask that is there a polynomial isotopy that connects a given polynomial knot to a linear one (to an unknot)? We have the affirmative answer to this question and this fact has been proved in my earlier paper \cite{rm} with Mishra. In the same paper, we have introduced the spaces $\mathcal{O}_d$, $\mathcal{P}_d$ and $\mathcal{Q}_d$ of polynomial knots as embeddings of $\mathbb{R}$ in $\mathbb{R}^3$ with the degrees of component polynomials less than or equal to $d$ and having some specific conditions on the degrees. We proved that if two polynomial knots belong to the same path component of the space $\mathcal{Q}_d$, then their extensions as embeddings of $S^1$ in $S^3$ are ambient isotopic (see \cite[Theorem 4.6]{rm}). In other words, if two knots as embeddings of $S^1$ in $S^3$ have polynomial representations that belong to different path components of the space $\mathcal{Q}_d$, then the knots are not ambient isotopic (are not equivalent as knots). In \cite{mr}, we have produced polynomial representations of all knots up to six crossings in their minimal degree except for the knots $5_2$, $6_1$, $6_2$ and $6_3$ which we have represented in degree $7$ (see Rolfsen's table in \cite[Appendix E]{vm} and \cite{dr} for the notations of knots). If one could prove that each path component of the space $\mathcal{Q}_6$ contains a polynomial knot representing one of the knot $0_1$, $3_1$ or $4_1$, then there will not be representations for the knots $5_2$, $6_1$, $6_2$ and $6_3$ in degree $6$ and hence the minimal polynomial degree of these knots will be $7$. With this regard, one can see that by studying spaces of polynomial knots would help in drawing some inferences in the point of view of knot theory. Reader may refer to \cite{cdm, vm, dr} for basic knot theory.

In \cite{rm}, we have explored the homotopy types of the spaces $\mathcal{O}_d$ for $d\geq3$ and proved that they have the same homotopy type as $S^2$ (see \cite[Theorem 4.13]{rm}). Using this result, we have proved that the space $\mathcal{P}$ of polynomial knots as embeddings of $\mathbb{R}$ in $\mathbb{R}^3$ with the inductive limit topology coming from the spaces $\mathcal{O}_d$ for $d\geq3$ has the same homotopy type as $S^2$ (see \cite[Corollary 4.15]{rm}). The space $\mathcal{P}$ can be given various topologies that come in a natural way. According to the topology given, the homotopy type of the space $\mathcal{P}$ may change. To look into this problem, we consider more generalized spaces $\mathcal{L}^n$, for $n\geq1$, of polynomial knots together with four types of various topologies on them. The space $\mathcal{L}^n$ is the space of polynomial knots as embeddings of $\mathbb{R}$ in $\mathbb{R}^n$. Note that the space $\mathcal{L}^3$ is the space $\mathcal{P}$ discussed in \cite{rm}. We also consider the space $\mathcal{L}$ as union of the spaces $\mathcal{L}^n$ for $n\geq1$. By identifying a polynomial knot $\phi:\mathbb{R}\hookrightarrow\mathbb{R}^n$ with the embedding $\tilde{\phi}:\mathbb{R}\hookrightarrow\mathbb{R}^\infty$ via the standard embedding of $\mathbb{R}^n$ in $\mathbb{R}^\infty$ and then eventually with a $\Lambda$-tuple $\left(\phi_{ij}\right)_{i,j}$, we can think of the sets $\mathcal{L}^n$ and $\mathcal{L}$ as subsets of $\mathbb{R}^\Lambda$, where $\Lambda=\mathbb{Z}^+\times\mathbb{N}$ and $\phi_{ij}$ is the coefficient of $t^j$ in the $i^{\text{th}}$ component of $\tilde{\phi}$. The sets $\mathcal{L}^n$ and $\mathcal{L}$ can be given the subspace topologies that inherit from the box and the product topologies of $\mathbb{R}^\Lambda$. We also have the topologies on $\mathcal{L}^n$ and $\mathcal{L}$ induced by the metrics $d_r$ (for $r\geq1$) and $d_\infty$ given by 
\begin{equation*}
d_r(\phi, \psi)=\left(\sum_{i,j}\left|\phi_{ij}-\psi_{ij}\right|^r\right)^{1/r}\qquad\text{and}\qquad d_\infty(\phi, \psi)=\sup_{i,j} \left| \phi_{ij}-\psi_{ij}\right|
\end{equation*}
\noindent for $\phi,\psi\in\mathcal{L}$. In this paper, we study the homotopy types of the spaces $\mathcal{L}^n$ and $\mathcal{L}$ with respect to the topologies discussed above. We see that the homotopy types of the spaces are independent on the topologies chosen and the results are compatible with the result proved in \cite{rm} regarding the homotopy type of the space $\mathcal{P}$.

This paper is organized as follows: In Section \ref{sec2}, we define sets $\mathcal{L}^n$ and $\mathcal{L}$, and discuss various topologies on them. In Section \ref{sec3}, we compare the topologies described in Section \ref{sec2}. In Section \ref{sec4}, we prove the results (see Theorem \ref{th1} and Theorem \ref{th2}) regarding the homotopy types of the spaces $\mathcal{L}^n$ and $\mathcal{L}$ with the topologies described in Section \ref{sec2}.

\section{Spaces $\mathcal{L}^n$ and $\mathcal{L}$}\label{sec2}

For a positive integer $n$, we define a polynomial knot in $\mathbb{R}^n$ as follows:

\begin{definition}
A polynomial knot in $\mathbb{R}^n$ is a smooth embedding $\mathbb{R}\hookrightarrow\mathbb{R}^n$ such that the component functions are real polynomials.
\end{definition}

In other words, a polynomial knot in $\mathbb{R}^n$ is a map $\phi:\mathbb{R}\to\mathbb{R}^n$ given by $t\mapsto\left(\phi_1(t),\phi_2(t),\ldots,\phi_n(t)\right)$ such that it satisfies the following conditions:
\begin{enumerate}[(1)]
\item $\phi_i\in\mathbb{R}[t]$ for $i=1,2,\ldots,n$, 
\item $\phi(s)\neq\phi(t)$ for $s\neq t$, and 
\item $\phi'(t)\neq0$ for $t\in\mathbb{R}$.
\end{enumerate}

\begin{notation}
For $n\geq1$, let $\mathcal{L}^n$ denote the set of all polynomial knots in $\mathbb{R}^n$, and let $\mathcal{L}=\bigcup_{n\in\mathbb{Z}^+}\mathcal{L}^n$.
\end{notation}

\begin{notation}
Let $\Lambda$ denote the set $\mathbb{Z}^+\times\mathbb{N}$, where $\mathbb{Z}^+$ is the set of all positive integers and $\mathbb{N}$ is the set of all nonnegative integers.
\end{notation}

Given a polynomial knot $\phi:\mathbb{R}\hookrightarrow\mathbb{R}^n$, it can be thought of as an embedding $\tilde{\phi}$ of $\mathbb{R}$ in $\mathbb{R}^\infty$ via the standard embedding of $\mathbb{R}^n$ in $\mathbb{R}^\infty$. Let us write $\tilde{\phi}(t)=\sum_j\left(\phi_{ij}\right)_i t^j$ for $t\in\mathbb{R}$, where $\phi_{ij}$ (for $(i,j)\in\Lambda$) is the coefficient of $t^{\hskip0.3mm j}$ in the $i^{\hskip0.3mm\text{th}}$ component of $\tilde{\phi}$. We can identify the polynomial knot $\phi$ with the $\Lambda$-tuple $\left(\phi_{ij}\right)_{i,j}$ of real numbers. With this identification of a polynomial knot, we can think of the sets $\mathcal{L}^n$ (for $n\geq1$) and $\mathcal{L}$ as subsets of $\mathbb{R}^\Lambda$, and thus they can be redefined as follows:
\begin{align*}
\mathcal{L}&=\left\{\left(\phi_{ij}\right)_{i,j}\in\mathbb{R}^\Lambda\hskip0.1mm\left|\right.\text{(i)}\;\phi_{ij}=0\;\text{for all but finitely many}\;(i,j)\in\Lambda,\text{and (ii) the map}\vphantom{t\mapsto\sum_j\left(\phi_{ij}\right)_i t^j}\right.\\
&\left.\hskip30mm\phi:\mathbb{R}\to\mathbb{R}^\infty\;\text{given by}\;t\mapsto\sum_j\left(\phi_{ij}\right)_i t^j\;\text{is a smooth embedding}\,\right\},\\[2pt]
\mathcal{L}^n&=\left\{\left(\phi_{ij}\right)_{i,j}\in\mathcal{L}\hskip0.3mm\left|\right.\phi_{ij}=0\;\text{if}\;i>n\;\text{and}\;j\geq0\right\}.
\end{align*}

We have the subspace topologies on $\mathcal{L}^n$ and $\mathcal{L}$ that inherit from the box and the product topologies of $\mathbb{R}^\Lambda$. The sets $\mathcal{L}^n$ and $\mathcal{L}$ can be equipped with the metric topologies. Let $r\geq1$ be any real number, and let $d_r$ be the metric on $\mathcal{L}$ defined as
\begin{equation}
d_r(\phi, \psi)=\left(\sum_{i,j}\left|\phi_{ij}-\psi_{ij}\right|^r\right)^{1/r}
\end{equation}
\noindent for $\phi,\psi\in\mathcal{L}$, where $\phi_{ij}$ and $\psi_{ij}$ (for $(i,j)\in\Lambda$) are the coefficients of $t^j$ in the $i^{\text{th}}$ components of $\phi$ and $\psi$ respectively. Denote an open ball centered at $\phi\in\mathcal{L}$ and of radius $\delta>0$ by $B_r(\phi,\delta)$, i.e. $B_r(\phi,\delta)=\left\{\psi\in\mathcal{L}:d_r(\phi,\psi)<\delta\right\}$. This type of balls generate a topology on $\mathcal{L}$. We have another metric $d_\infty$ on $\mathcal{L}$ defined as

\begin{equation}
d_\infty(\phi, \psi)=\sup_{i,j} \left| \phi_{ij}-\psi_{ij}\right|
\end{equation}
\noindent for $\phi,\psi\in\mathcal{L}$. This metric induces a topology on $\mathcal{L}$, i.e. the topology is generated by open balls of the type $B_\infty(\phi,\delta)=\left\{\psi\in\mathcal{L}:d_\infty(\phi,\psi)<\delta\right\}$, where $\phi\in\mathcal{L}$ and $\delta>0$. Since $\mathcal{L}^n$ is a subset of $\mathcal{L}$, we have the topologies on $\mathcal{L}^n$ induced by the metrics $d_r$ and $d_\infty$ when restricted to $\mathcal{L}^n$. Note that these topologies are generated by balls of the types $B_r^n(\phi,\delta)=\left\{\psi\in\mathcal{L}^n:d_r(\phi,\psi)<\delta\right\}$ and $B_\infty^n(\phi,\delta)=\left\{\psi\in\mathcal{L}^n:d_\infty(\phi,\psi)<\delta\right\}$ respectively, where $\phi\in\mathcal{L}^n$ and $\delta>0$. We denote the topologies on $\mathcal{L}^n$ and $\mathcal{L}$ as in Table \ref{tbl1}.

\begin{table}[H]
	\caption{Topologies on $\mathcal{L}^n$ and $\mathcal{L}$}\label{tbl1}
	\centering
	\begin{tabular}{|p{61mm}| p{26mm} | p{26mm}|}\hline
		Description & Notations for the topologies on $\mathcal{L}$ & Notations for the topologies on $\mathcal{L}^n$\\ \hline
		The subspace topology that inherit from the box topology of $\mathbb{R}^\Lambda$ & $\mathcal{T}_b$ & $\mathcal{T}_b^n$\\ \hline
		The subspace topology that inherit from the product topology of $\mathbb{R}^\Lambda$ & $\mathcal{T}_p$ & $\mathcal{T}_p^n$\\ \hline
		The metric topology induced by the metric $d_r$ on $\mathcal{L}$ (respectively by $d_r|_{\mathcal{L}^n}$) & $\mathcal{T}_r$ & $\mathcal{T}_r^n$\\ \hline
		The metric topology induced by the metric $d_\infty$ on $\mathcal{L}$ (respectively by $d_\infty|_{\mathcal{L}^n}$) & $\mathcal{T}_\infty$ & $\mathcal{T}_\infty^n$\\ \hline
	\end{tabular}
\end{table}

\begin{remark}
Suppose the spaces $\mathbb{R}^\Lambda$ and $\mathbb{R}^\infty$ have the box topologies or both have the product topologies. Then any bijection $\alpha:\Lambda\to\mathbb{Z}^+$ induces a homeomorphism $\Phi:\mathbb{R}^\Lambda\to\mathbb{R}^\infty$ given by $\left(\phi_{ij}\right)_{i,j}\mapsto\left(x_k\right)_k$, where $x_k=\phi_{ij}$ if $k=\alpha(i,j)$. With this type of homeomorphism in a mind, the spaces $\left(\mathcal{L}^n,\mathcal{T}_b^n\right)$ and $\left(\mathcal{L},\mathcal{T}_b\right)$ can be considered as subspaces of the space $\mathbb{R}^\infty$ with the box topology, and the spaces $\left(\mathcal{L}^n,\mathcal{T}_p^n\right)$ and $\left(\mathcal{L},\mathcal{T}_p\right)$ can be regarded as subspaces of the space $\mathbb{R}^\infty$ with the product topology.
\end{remark}

\section{Comparison between the topologies on $\mathcal{L}^n$ and $\mathcal{L}$}\label{sec3}

In this section, we compare the topologies $\mathcal{T}_b$, $\mathcal{T}_p$, $\mathcal{T}_r$, $\mathcal{T}_s$ (for $r>s\geq1$) and $\mathcal{T}_\infty$ on $\mathcal{L}$ and the topologies $\mathcal{T}_b^n$, $\mathcal{T}_p^n$, $\mathcal{T}_r^n$, $\mathcal{T}_s^n$ and $\mathcal{T}_\infty^n$ on $\mathcal{L}^n$.
 
\begin{lemma}\label{lem1}
Let $r$ and $s$ be real numbers such that $r\geq s\geq1$ and suppose $a_1,a_2,\ldots,a_k$ be nonnegative real numbers. Then $\left(\sum_{i=1}^k{a_i}^r\right)^{1/r}\leq\left(\sum_{i=1}^k{a_i}^s\right)^{1/s}$.
\end{lemma}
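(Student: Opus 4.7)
The plan is to use the standard homogeneity-and-normalization trick that reduces the inequality to the elementary observation that $x^r \leq x^s$ whenever $0 \leq x \leq 1$ and $r \geq s$.

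First I would dispose of the degenerate case in which every $a_i$ is zero, where both sides are $0$. So assume some $a_i > 0$, and set
\begin{equation*}
M = \left(\sum_{i=1}^{k} a_i^{\,s}\right)^{1/s} > 0, \qquad b_i = \frac{a_i}{M}.
\end{equation*}
By construction $\sum_{i=1}^{k} b_i^{\,s} = 1$, and since each $b_i \geq 0$, this forces $0 \leq b_i \leq 1$ for every $i$.

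Next, because $r \geq s \geq 1$, the monotonicity of the exponential in the exponent on $[0,1]$ gives $b_i^{\,r} \leq b_i^{\,s}$ for each $i$. Summing over $i$ yields $\sum_{i=1}^{k} b_i^{\,r} \leq \sum_{i=1}^{k} b_i^{\,s} = 1$, and hence $\bigl(\sum_{i=1}^{k} b_i^{\,r}\bigr)^{1/r} \leq 1$. Unnormalizing by multiplying through by $M$, and using $b_i^{\,r} = a_i^{\,r}/M^r$, one obtains
\begin{equation*}
\left(\sum_{i=1}^{k} a_i^{\,r}\right)^{1/r} \leq M = \left(\sum_{i=1}^{k} a_i^{\,s}\right)^{1/s},
\end{equation*}
which is the claim.

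This argument is entirely elementary and I do not anticipate a genuine obstacle; the only step requiring any care is ensuring $b_i \leq 1$ so that raising to a higher power decreases the value, and this is guaranteed by the normalization $\sum b_i^{\,s}=1$. Alternative routes (for example, applying the power-mean inequality with weights $a_i^{\,s}/\sum_j a_j^{\,s}$, or invoking Jensen's inequality for the concave function $x \mapsto x^{s/r}$ on $[0,\infty)$) would also work, but the normalization approach above is the shortest and avoids any appeal to outside machinery.
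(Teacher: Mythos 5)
Your proof is correct, but it follows a different route from the paper's. You normalize by $M=\left(\sum_i a_i^{\,s}\right)^{1/s}$ so that the rescaled numbers $b_i=a_i/M$ satisfy $\sum_i b_i^{\,s}=1$, force $b_i\leq 1$, and then use the elementary fact that $x^r\leq x^s$ for $x\in[0,1]$ and $r\geq s$. The paper instead sets $u=r/s$, writes $u=l+v$ with $l\geq1$ an integer and $0\leq v<1$, puts $b_i=a_i^{\,s}$ and $b=\sum_i b_i$, and chains the estimates $\sum_i b_i^{\,u}\leq b^v\sum_i b_i^{\,l}\leq b^v b^l=b^u$, where the key step $\sum_i b_i^{\,l}\leq\left(\sum_i b_i\right)^l$ uses that $l$ is a positive integer (expansion of the $l$-th power of a sum of nonnegative terms). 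Your normalization argument is the standard textbook proof of the monotonicity of $\ell^p$-norms; it is shorter, avoids the integer/fractional decomposition of the exponent, and handles all real $r\geq s\geq1$ uniformly, at the cost of a division that requires separating out the all-zero case (which you do). Both arguments are elementary and fully rigorous, so there is no gap to report.
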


\begin{proof}
Let $u=\frac{r}{s}$ and let $l$ be the integer such that $u=l+v$ for some $0\leq v<1$. Note that $l\geq1$. Let $b_i={a_i}^s$ for $i=1,2,\ldots,k$ and $b=\sum_{i=1}^kb_i$. We have following:
\begin{equation}
\sum_{i=1}^k{b_i}^u\leq\sum_{i=1}^k{b_i}^v{b_i}^l\leq b^v\sum_{i=1}^k{b_i}^l\leq\left(\sum_{i=1}^kb_i\right)^v\left(\sum_{i=1}^kb_i\right)^l\leq\left(\sum_{i=1}^kb_i\right)^u
\end{equation}
\noindent Thus, $\sum_{i=1}^k{a_i}^r\leq\left(\sum_{i=1}^k{a_i}^s\right)^{r/s}$ and hence the result.
\end{proof}

\begin{proposition}\label{prop1}
For $r>s\geq1$, we have the relation $\mathcal{T}_p\subseteq\mathcal{T}_\infty\subseteq\mathcal{T}_r\subseteq\mathcal{T}_s\subseteq\mathcal{T}_b$ between the topologies on $\mathcal{L}$.
\end{proposition}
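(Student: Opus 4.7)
The plan is to verify each containment $\mathcal{T}_p \subseteq \mathcal{T}_\infty \subseteq \mathcal{T}_r \subseteq \mathcal{T}_s \subseteq \mathcal{T}_b$ separately. In each case I take a basic open set in the coarser topology, pick an arbitrary point $\phi$ in it, and produce a basic open neighborhood of $\phi$ in the finer topology that stays inside the original. The essential structural fact I will keep using is that every element of $\mathcal{L}$ has only finitely many nonzero coefficients, so whenever I write a sum $\sum_{(i,j)} |\phi_{ij} - \psi_{ij}|^r$ for $\phi, \psi \in \mathcal{L}$ it is automatically a finite sum.

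For $\mathcal{T}_p \subseteq \mathcal{T}_\infty$: a basic $\mathcal{T}_p$-open set constrains only finitely many coordinates, so any $d_\infty$-ball whose radius is smaller than the minimum gap between each constrained $\phi_{ij}$ and the boundary of its interval fits inside. For $\mathcal{T}_\infty \subseteq \mathcal{T}_r$: on $\mathcal{L}$ one has the elementary bound $d_\infty \leq d_r$ (the supremum of finitely many values is dominated by any $\ell^r$ norm of them), so $B_r(\phi,\eta) \subseteq B_\infty(\phi,\eta)$, and a routine triangle-inequality argument places a $d_r$-ball inside any $d_\infty$-ball around each of its interior points. For $\mathcal{T}_r \subseteq \mathcal{T}_s$ with $r > s \geq 1$: Lemma \ref{lem1} gives $d_r \leq d_s$, so $B_s(\phi,\eta) \subseteq B_r(\phi,\eta)$ and the same triangle-inequality trick closes the argument.

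The serious step is $\mathcal{T}_s \subseteq \mathcal{T}_b$. Given $\phi \in B_s(\phi_0,\delta)$, set $\epsilon = \delta - d_s(\phi,\phi_0) > 0$. Since $\Lambda = \mathbb{Z}^+ \times \mathbb{N}$ is countable, I can select positive numbers $\epsilon_{ij}$, $(i,j) \in \Lambda$, whose $s$-th powers form a convergent series with $\sum_{(i,j) \in \Lambda} \epsilon_{ij}^s < \epsilon^s$ (for example a geometric choice $\epsilon_{ij} = c\, 2^{-(i+j)}$ with $c$ suitably small). Then $V = \prod_{(i,j)}(\phi_{ij} - \epsilon_{ij},\, \phi_{ij} + \epsilon_{ij})$ is a basic box-open subset of $\mathbb{R}^\Lambda$, so $V \cap \mathcal{L}$ is a $\mathcal{T}_b$-neighborhood of $\phi$. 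For any $\psi \in V \cap \mathcal{L}$, both $\psi$ and $\phi$ have finite support, so
\[
d_s(\phi,\psi)^s \;=\; \sum_{(i,j)} |\psi_{ij} - \phi_{ij}|^s \;\leq\; \sum_{(i,j)} \epsilon_{ij}^s \;<\; \epsilon^s,
\]
and the triangle inequality then places $\psi$ in $B_s(\phi_0,\delta)$, which gives $V \cap \mathcal{L} \subseteq B_s(\phi_0,\delta)$ as required.

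I expect this final inclusion to be the main obstacle: on an arbitrary infinite-product space the box topology is strictly finer than any natural metric topology, so the containment $\mathcal{T}_s \subseteq \mathcal{T}_b$ is not a formal consequence of the definitions. What rescues it here is precisely the finite-support rigidity of $\mathcal{L}$, which allows an apparently delicate box neighborhood with infinitely many independent (and possibly shrinking) coordinate widths to be controlled by a single convergent $\ell^s$-sum. The other three inclusions amount to standard metric comparisons and are essentially immediate.
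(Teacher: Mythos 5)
Your proof is correct and follows essentially the same route as the paper: the first three inclusions are the identical metric comparisons ($d_\infty\leq d_r\leq d_s$ via Lemma \ref{lem1} plus the triangle inequality), and for $\mathcal{T}_s\subseteq\mathcal{T}_b$ you use the same idea of a box neighborhood with summable coordinate widths. The only difference is cosmetic: you pick widths $\epsilon_{ij}$ with $\sum\epsilon_{ij}^s<\epsilon^s$ and bound $d_s$ directly, whereas the paper uses the specific widths $\delta^{4i(j+1)}$, bounds $d_1$ by a geometric-series computation, and then invokes $d_s\leq d_1$ --- your version is slightly cleaner but not a different argument.
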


\begin{proof}
We prove the proposition in the following parts:

(1) To prove $\mathcal{T}_p\subseteq\mathcal{T}_\infty$: The topology $\mathcal{T}_p$ has the basis containing the sets of the type $\mathcal{L}\bigcap\prod_{i,j} U_{ij}$, where $U_{ij}$ is open in $\mathbb{R}$ for $(i,j)\in\Lambda$ and $U_{ij}=\mathbb{R}$ for all but finitely many $(i,j)\in\Lambda$. Let $U=\mathcal{L}\bigcap\prod_{i,j} U_{ij}$ be an open set in this basis. We show that $U$ is open in $(\mathcal{L},\mathcal{T}_\infty)$. Suppose an element $\phi\in U$ be given. It is sufficient to show that $B_\infty(\phi,\delta)\subseteq U$ for some $\delta>0$. Let $\Lambda_U=\left\{(i,j)\in\Lambda:U_{ij}\neq\mathbb{R}\right\}$. Note that $\Lambda_U$ is a finite set. Since $\phi_{ij}\in U_{ij}$ for $(i,j)\in\Lambda_U$, we can choose $\delta>0$ such that $\left(\phi_{ij}-\delta,\phi_{ij}+\delta\right)\subseteq U_{ij}$ for all $(i,j)\in\Lambda_U$. Now consider an open ball $B_\infty(\phi,\delta)$. Suppose $\psi\in B_\infty(\phi,\delta)$. Then $d_\infty(\phi,\psi)<\delta$ and hence $\psi_{ij}\in\left(\phi_{ij}-\delta,\phi_{ij}+\delta\right)$ for $(i,j)\in\Lambda_U$. This implies that $\psi_{ij}\in U_{ij}$ for $(i,j)\in\Lambda_U$. Note that $\psi_{ij}\in U_{ij}$ for $(i,j)\in\Lambda\setminus\Lambda_U$, since $U_{ij}=\mathbb{R}$ for $(i,j)\in\Lambda\setminus\Lambda_U$. Thus, $\psi\in U$. This shows that $B_\infty(\phi,\delta)\subseteq U$.

(2) To prove $\mathcal{T}_\infty\subseteq\mathcal{T}_r$: Let $B_\infty(\phi,\epsilon)$, for $\phi\in\mathcal{L}$ and $\epsilon>0$, be an open ball in $(\mathcal{L},d_\infty)$. We show that $B_\infty(\phi,\epsilon)$ is open in $(\mathcal{L},d_r)$. Suppose an element $\psi\in B_\infty(\phi,\epsilon)$ be given. It is enough to show that $B_r(\psi,\delta)\subseteq B_\infty(\phi,\epsilon)$ for some $\delta>0$. Take $\delta=\frac{\epsilon-d_\infty(\phi,\psi)}{2}$. Since $d_\infty(\psi,\omega)\leq d_r(\psi,\omega)$ for $\omega\in\mathcal{L}$, we have $d_\infty(\phi,\omega)\leq d_\infty(\phi,\psi)+d_\infty(\psi,\omega)\leq d_\infty(\phi,\psi)+d_r(\psi,\omega)\leq d_\infty(\phi,\psi)+\delta<\epsilon$ whenever $d_r(\psi,\omega)<\delta$. This shows that $B_r(\psi,\delta)\subseteq B_\infty(\phi,\epsilon)$.
 
(3) To prove $\mathcal{T}_r\subseteq\mathcal{T}_s$: By Lemma \ref{lem1}, we have $d_r(\phi,\psi)\leq d_s(\phi,\psi)$ for $\phi,\psi\in\mathcal{L}$. The rest of the proof follows in a similar way as in the previous part.

(4) To prove $\mathcal{T}_s\subseteq\mathcal{T}_b$: Let $B_s(\phi,\epsilon)$, for $\phi\in\mathcal{L}$ and $\epsilon>0$, be an open ball in $(\mathcal{L},d_s)$. We show that $B_s(\phi,\epsilon)$ is open in $(\mathcal{L},\mathcal{T}_b)$. Suppose an element $\psi\in B_s(\phi,\epsilon)$ be given. The topology $\mathcal{T}_b$ has the basis containing the sets of the type $\mathcal{L}\bigcap\prod_{i,j} V_{ij}$, where $V_{ij}$ is open in $\mathbb{R}$ for $(i,j)\in\Lambda$. It is sufficient to show that there is a basic open neighborhood $V$ of $\psi$ such that $V\subseteq B_s(\phi,\epsilon)$. Let $\delta=\min\left\{1/2,\epsilon-d_s(\phi,\psi)\right\}$ and $V=\mathcal{L}\bigcap\prod_{i,j} V_{ij}$, where $V_{ij}=\left(\psi_{ij}-\delta^{4i(j+1)},\hskip0.2mm\psi_{ij}+\delta^{4i(j+1)}\right)$ for $(i,j)\in\Lambda$. Note that $\psi\in V$. Suppose $\omega\in V$. Then $\left|\psi_{ij}-\omega_{ij}\right|<\delta^{4i(j+1)}$ for $(i,j)\in\Lambda$. Let $\Lambda_\omega=\left\{(i,j)\in\Lambda:\psi_{ij}\neq0\;\,\text{or}\;\,\omega_{ij}\neq0\right\}$. Note that $\Lambda_\omega$ is a finite set. We have $d_1(\psi,\omega)\leq\sum_{(i,j)\in\Lambda_\omega}\left|\psi_{ij}-\omega_{ij}\right|\leq\sum_{(i,j)\in\Lambda_\omega}\delta^{4i(j+1)}\leq\sum_{i=1}^\infty\sum_{j=0}^\infty\delta^{4i(j+1)}\leq\sum_{i=1}^\infty\frac{\delta^{4i}}{1-\delta^{4i}}\leq\sum_{i=1}^\infty\delta^{3i}\leq\frac{\delta^3}{1-\delta^3}\leq\delta^2$ (since $\delta\leq1/2$). By Lemma \ref{lem1}, $d_s(\psi,\omega)\leq d_1(\psi,\omega)$; thus, $d_s(\phi,\omega)\leq d_s(\phi,\psi)+d_s(\psi,\omega)\leq d_s(\phi,\psi)+d_1(\psi,\omega)\leq d_s(\phi,\psi)+\delta^2<\epsilon$ (since $\delta\leq\epsilon-d_s(\phi,\psi)$ and $\delta\leq1/2$). In other words $\omega\in B_s(\phi,\epsilon)$. This shows that $V\subseteq B_s(\phi,\epsilon)$.
\end{proof}

For $n\in\mathbb{Z}^+$ and for real numbers $r>s\geq1$, the topologies $\mathcal{T}_p^n$, $\mathcal{T}_\infty^n$, $\mathcal{T}_r^n$, $\mathcal{T}_s^n$ and $\mathcal{T}_b^n$ can be seen as the subspace topologies that inherit from the topologies $\mathcal{T}_p$, $\mathcal{T}_\infty$, $\mathcal{T}_r$, $\mathcal{T}_s$ and $\mathcal{T}_b$ respectively. Thus, the next corollary follows trivially from Proposition \ref{prop1}.

\begin{corollary}\label{cor1}
For $n\in\mathbb{Z}^+$ and for $r>s\geq1$, we have the relation $\mathcal{T}_p^n\subseteq\mathcal{T}_\infty^n\subseteq\mathcal{T}_r^n\subseteq\mathcal{T}_s^n\subseteq\mathcal{T}_b^n$ between the topologies on $\mathcal{L}^n$.
\end{corollary}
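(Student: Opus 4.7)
The plan is to deduce the corollary directly from Proposition \ref{prop1} together with the elementary fact that subspace topologies preserve inclusions: if $\mathcal{T}_1\subseteq\mathcal{T}_2$ are two topologies on a set $X$ and $Y\subseteq X$, then the subspace topologies satisfy $\mathcal{T}_1|_Y\subseteq\mathcal{T}_2|_Y$, since every set of the form $U\cap Y$ with $U\in\mathcal{T}_1$ is of the same form with $U\in\mathcal{T}_2$. Consequently, it suffices to verify that each of the five topologies $\mathcal{T}_b^n$, $\mathcal{T}_p^n$, $\mathcal{T}_r^n$, $\mathcal{T}_s^n$, $\mathcal{T}_\infty^n$ is indeed the subspace topology on $\mathcal{L}^n$ inherited from the corresponding topology on $\mathcal{L}$.

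For the metric topologies the identification is immediate. For $\phi\in\mathcal{L}^n$ and $\delta>0$ one has $B_r^n(\phi,\delta)=B_r(\phi,\delta)\cap\mathcal{L}^n$ and $B_\infty^n(\phi,\delta)=B_\infty(\phi,\delta)\cap\mathcal{L}^n$, and these balls are bases for the topologies on $\mathcal{L}^n$ induced by $d_r|_{\mathcal{L}^n}$ and $d_\infty|_{\mathcal{L}^n}$ respectively. Hence $\mathcal{T}_r^n$ and $\mathcal{T}_\infty^n$ coincide with the respective subspace topologies inherited from $(\mathcal{L},\mathcal{T}_r)$ and $(\mathcal{L},\mathcal{T}_\infty)$.

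For $\mathcal{T}_p^n$ and $\mathcal{T}_b^n$, I would use the standard fact that the subspace topology is transitive under nested inclusions: for $\mathcal{L}^n\subseteq\mathcal{L}\subseteq\mathbb{R}^\Lambda$, the subspace topology that $\mathcal{L}^n$ inherits from $\mathbb{R}^\Lambda$ with its product (resp.\ box) topology equals the subspace topology that $\mathcal{L}^n$ inherits from $(\mathcal{L},\mathcal{T}_p)$ (resp.\ from $(\mathcal{L},\mathcal{T}_b)$). Since $\mathcal{T}_p^n$ and $\mathcal{T}_b^n$ were defined as the subspace topologies of $\mathcal{L}^n$ in $\mathbb{R}^\Lambda$, this identification is exactly what is needed.

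With all five topologies on $\mathcal{L}^n$ thus realized as restrictions of the corresponding topologies on $\mathcal{L}$, the chain $\mathcal{T}_p\subseteq\mathcal{T}_\infty\subseteq\mathcal{T}_r\subseteq\mathcal{T}_s\subseteq\mathcal{T}_b$ from Proposition \ref{prop1} restricts to the desired chain $\mathcal{T}_p^n\subseteq\mathcal{T}_\infty^n\subseteq\mathcal{T}_r^n\subseteq\mathcal{T}_s^n\subseteq\mathcal{T}_b^n$. There is no substantive obstacle here; the proof is purely a bookkeeping invocation of subspace-topology transitivity together with the observation that restricting the relevant generating balls or basic open sets commutes with intersecting with $\mathcal{L}^n$.
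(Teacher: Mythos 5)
Your proposal is correct and follows essentially the same route as the paper: the paper likewise observes that $\mathcal{T}_p^n$, $\mathcal{T}_\infty^n$, $\mathcal{T}_r^n$, $\mathcal{T}_s^n$ and $\mathcal{T}_b^n$ are the subspace topologies inherited from $\mathcal{T}_p$, $\mathcal{T}_\infty$, $\mathcal{T}_r$, $\mathcal{T}_s$ and $\mathcal{T}_b$ respectively, so the chain of inclusions restricts from Proposition \ref{prop1}. Your additional remarks verifying the identifications (restriction of metric balls, transitivity of the subspace topology) are accurate and simply make explicit what the paper treats as trivial.
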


\begin{proposition}
For $n\in\mathbb{Z}^+$ and for $r>s\geq1$, we have the relation $\mathcal{T}_p\subsetneq\mathcal{T}_\infty\subsetneq\mathcal{T}_r\subsetneq\mathcal{T}_s\subsetneq\mathcal{T}_b$ between the topologies on $\mathcal{L}$ and the relation $\mathcal{T}_p^n\subsetneq\mathcal{T}_\infty^n\subsetneq\mathcal{T}_r^n\subsetneq\mathcal{T}_s^n\subsetneq\mathcal{T}_b^n$ between the topologies on $\mathcal{L}^n$.
\end{proposition}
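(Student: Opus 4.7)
The plan is to leverage Proposition \ref{prop1} and Corollary \ref{cor1}, which already supply the inclusions $\subseteq$, so what remains is to witness strict containment in each of the four adjacent pairs, both in $\mathcal{L}$ and in $\mathcal{L}^n$. I will anchor every construction at the linear knot $\phi_0(t)=(t,0,0,\ldots)$ and exploit the flexibility of its polynomial-knot perturbations: for $n\geq 2$, adding \emph{any} polynomial to the second coordinate yields an element of $\mathcal{L}^n$, since the first coordinate $t$ alone guarantees injectivity and a nonvanishing derivative; for $n=1$, adding any nonnegative combination of the monomials $t^{2m+1}$ with $m\geq 1$ preserves strict monotonicity of $\phi_0$, hence the embedding property. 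This reduces each strict inclusion to a finite combinatorial estimate involving one chosen coordinate slot.

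For $\mathcal{T}_p\subsetneq\mathcal{T}_\infty$ I would show that $B_\infty(\phi_0,1)$ is not $\mathcal{T}_p$-open at $\phi_0$: any basic product neighborhood $U$ constrains only finitely many coordinates $\Lambda_U$, so I pick $(i_0,j_0)\notin\Lambda_U$ (with $j_0$ odd and $\geq 3$ if $n=1$) and let $\psi$ agree with $\phi_0$ except that $\psi_{i_0 j_0}=2$; then $\psi\in U$ but $d_\infty(\phi_0,\psi)=2>1$. For $\mathcal{T}_\infty\subsetneq\mathcal{T}_r$ I would show $B_r(\phi_0,1)$ is not $\mathcal{T}_\infty$-open: for arbitrary $\epsilon>0$, choose $N$ with $N^{1/r}(\epsilon/2)\geq 1$ and distribute the coefficient $\epsilon/2$ among $N$ distinct admissible exponents in one chosen coordinate slot, giving $d_\infty(\phi_0,\psi)=\epsilon/2<\epsilon$ and $d_r(\phi_0,\psi)\geq 1$. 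For $\mathcal{T}_r\subsetneq\mathcal{T}_s$ the identical template works with common coefficient $a=N^{-1/s}$ spread over $N$ distinct admissible exponents: since $r>s$ we have $1/r-1/s<0$, so for $N$ large enough $d_r(\phi_0,\psi)=N^{1/r-1/s}<\epsilon$ while $d_s(\phi_0,\psi)=1$. Finally, for $\mathcal{T}_s\subsetneq\mathcal{T}_b$ I would exhibit a box-open neighborhood $V$ of $\phi_0$ that is not $\mathcal{T}_s$-open by producing a sequence converging to $\phi_0$ in $d_s$ but eventually outside $V$: take $V=\mathcal{L}\cap\prod V_{ij}$ with $V_{i_0,k}=(-1/(2k),\,1/(2k))$ for $k$ ranging over an infinite set of admissible exponents and $V_{ij}=\mathbb{R}$ elsewhere, and set $\psi_k=\phi_0+(1/k)t^{k}$ in slot $i_0$; then $d_s(\phi_0,\psi_k)=1/k\to 0$ while the $(i_0,k)$-coefficient of $\psi_k$ is $1/k\notin V_{i_0,k}$, so $\psi_k\notin V$.

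The corresponding claim for $\mathcal{L}^n$ follows from the very same witnesses, because each constructed perturbation lies in $\mathcal{L}^n$: I use slot $i_0=2$ when $n\geq 2$ and slot $i_0=1$ with all exponents odd and $\geq 3$ when $n=1$. The only genuine nuisance, rather than a deep obstacle, is the bookkeeping needed to keep every perturbed polynomial a smooth embedding in the low-dimensional case $n=1$; committing once and for all to odd exponents with nonnegative coefficients dispatches this uniformly across all four constructions.
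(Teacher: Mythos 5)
Your proposal is correct and follows essentially the same route as the paper: both anchor at the linear knot $t\mapsto(t,0,\ldots,0)$ and refute openness of a fixed ball (or box neighborhood) by monomial perturbations with odd exponents and controlled coefficients, spreading mass over $N$ slots for the $d_\infty$-versus-$d_r$ and $d_r$-versus-$d_s$ separations and using a shrinking box neighborhood for $\mathcal{T}_s\subsetneq\mathcal{T}_b$. The only (immaterial) difference is directional: the paper produces the witnesses inside $\mathcal{L}^n$ and deduces the statement for $\mathcal{L}$ via the subspace-topology relation, whereas you argue in $\mathcal{L}$ and then observe the witnesses already lie in $\mathcal{L}^n$.
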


\begin{proof}
Since the topologies $\mathcal{T}_p^n$, $\mathcal{T}_\infty^n$, $\mathcal{T}_r^n$, $\mathcal{T}_s^n$ and $\mathcal{T}_b^n$ can be viewed as the subspace topologies that inherit from the topologies $\mathcal{T}_p$, $\mathcal{T}_\infty$, $\mathcal{T}_r$, $\mathcal{T}_s$ and $\mathcal{T}_b$ respectively, it is enough to show that the set inclusions in Corollary \ref{cor1} are proper. We do this in the following parts:

(1) To show that $\mathcal{T}_\infty^n$ is strictly finer than $\mathcal{T}_p^n$: Let $\phi:\mathbb{R}\to\mathbb{R}^n$ be defined as $t\mapsto\left(t,0,\ldots,0\right)$. Note that $\phi\in\mathcal{L}^n$ and $B_\infty^n(\phi,1/2)\in\mathcal{T}_\infty^n$. We show that $B_\infty^n(\phi,1/2)\notin\mathcal{T}_p^n$. Suppose contrary that $B_\infty^n(\phi,1/2)\in\mathcal{T}_p^n$. Then there is a basic open neighborhood $U=\mathcal{L}^n\bigcap\prod_{i,j} U_{ij}$ of $\phi$ such that $U\subseteq B_\infty^n(\phi,1/2)$, where $U_{ij}$ is open in $\mathbb{R}$ for $(i,j)\in\Lambda$ and $U_{ij}=\mathbb{R}$ for all but finitely many $(i,j)\in\Lambda$. Choose an odd positive integer $k$ such that $U_{1k}=\mathbb{R}$. Let $\psi:\mathbb{R}\to\mathbb{R}^n$ be defined as $t\mapsto\left(t^k+t,0,\ldots,0\right)$. One can see that $\psi\in\mathcal{L}^n$, $\psi_{ij}\in U_{ij}$ for $(i,j)\in\Lambda\setminus\{(1,k)\}$ (since $\psi_{ij}=\phi_{ij}$ and $\phi_{ij}\in U_{ij}$), $\psi_{1k}\in U_{1k}$ (since $U_{1k}=\mathbb{R}$) and $d_\infty(\phi,\psi)=1$. Thus, $\psi\in U$ and $\psi\notin B_\infty^n(\phi,1/2)$. This is a contradiction.

(2) To show that $\mathcal{T}_r^n$ is strictly finer than $\mathcal{T}_\infty^n$: Let $\phi$ be defined as in the first part. Note that $B_r^n(\phi,1/2)\in\mathcal{T}_r^n$. We show that $B_r^n(\phi,1/2)\notin\mathcal{T}_\infty^n$. Suppose contrary that $B_r^n(\phi,1/2)\in\mathcal{T}_\infty^n$. Then there is a $\delta>0$ such that $B_\infty^n(\phi,\delta)\subseteq B_r^n(\phi,1/2)$. Choose an integer $k>\delta^{-r}$. Let $\omega:\mathbb{R}\to\mathbb{R}^n$ be defined as 
\begin{equation}
t\mapsto\left(k^{-\frac{1}{r}}\left(t^{2k+1}+t^{2k-1}+\cdots+t^3\right)+t,\,0,\,\ldots,\,0\right)
\end{equation}
\noindent We can see that $\omega\in\mathcal{L}^n$, $d_\infty(\phi,\omega)=k^{-\frac{1}{r}}<\delta$ and $d_r(\phi,\omega)=1$. Thus, $\omega\in B_\infty^n(\phi,\delta)$ and $\omega\notin B_r^n(\phi,1/2)$. This is a contradiction.

(3) To show that $\mathcal{T}_s^n$ is strictly finer than $\mathcal{T}_r^n$: Let $\phi$ be defined as in the first part. Note that $B_s^n(\phi,1/2)\in\mathcal{T}_s^n$. We show that $B_s^n(\phi,1/2)\notin\mathcal{T}_r^n$. Suppose contrary that $B_s^n(\phi,1/2)\in\mathcal{T}_r^n$. Then there is a $\delta>0$ such that $B_r^n(\phi,\delta)\subseteq B_s^n(\phi,1/2)$. Choose an integer $k>\delta^{\frac{rs}{s-r}}$. Let $\sigma:\mathbb{R}\to\mathbb{R}^n$ be defined as 
\begin{equation}
t\mapsto\left(k^{-\frac{1}{s}}\left(t^{2k+1}+t^{2k-1}+\cdots+t^3\right)+t,\,0,\,\ldots,\,0\right)
\end{equation}
\noindent One can see that $\sigma\in\mathcal{L}^n$, $d_r(\phi,\sigma)=k^{\frac{s-r}{rs}}<\delta$ and $d_s(\phi,\sigma)=1$. Thus, $\sigma\in B_r^n(\phi,\delta)$ and $\sigma\notin B_s^n(\phi,1/2)$. This is a contradiction.

(4) To show that $\mathcal{T}_b^n$ is strictly finer than $\mathcal{T}_s^n$: Let $\phi$ be defined as in the first part. Let $U=\mathcal{L}^n\bigcap\prod_{i,j} U_{ij}$, where $U_{ij}=\left(-1,\frac{3}{i(j+1)}\right)$. Note that $U\in\mathcal{T}_b^n$. Since $\phi_{ij}=0\in U_{ij}$ for $(i,j)\in\Lambda\setminus\{(1,1)\}$ and $\phi_{11}=1\in U_{11}$, $\phi\in U$. We show that $U\notin\mathcal{T}_s^n$. Suppose contrary that $U\in\mathcal{T}_s^n$. Then there is a $\delta>0$ such that $B_s^n(\phi,\delta)\subseteq U$. Choose an odd positive integer $k>\frac{4-\delta}{\delta}$. Let $\xi:\mathbb{R}\to\mathbb{R}^n$ be defined as $t\mapsto\left(\frac{4}{k+1}t^k+t,\,0,\,\ldots,\,0\right)$. Since $d_s(\phi,\xi)=\frac{4}{k+1}<\delta$ and $\xi_{1k}\geq\frac{4}{k+1}>\frac{3}{k+1}$ (i.e. $\xi_{1k}\notin U_{1k}$), we have $\xi\in B_s^n(\phi,\delta)$ and $\xi\notin U$. This is a contradiction.
\end{proof}

\section{Homotopy types of the spaces $\mathcal{L}^n$ and $\mathcal{L}$}\label{sec4}

In this section, we prove the following theorems regarding the homotopy types of the spaces $\mathcal{L}^n$ and $\mathcal{L}$ with the topologies as in Table \ref{tbl1}.

\begin{theorem}\label{th1}
The space $\mathcal{L}^n$ having any of the topologies $\mathcal{T}_b^n$, $\mathcal{T}_p^n$, $\mathcal{T}_r^n$ (for $r\geq1$) and $\mathcal{T}_\infty^n$ has the same homotopy type as $S^{n-1}$.
\end{theorem}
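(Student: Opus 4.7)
The plan is to exhibit a strong deformation retraction of $\mathcal{L}^n$ onto the subspace $\mathcal{E}^n$ of affine polynomial knots---those of the form $t\mapsto a_0+a_1 t$ with $a_1\in\mathbb{R}^n\setminus\{0\}$---and then observe that $\mathcal{E}^n\cong\mathbb{R}^n\times(\mathbb{R}^n\setminus\{0\})$ is homotopy equivalent to $S^{n-1}$. Since the same retraction formula will serve for all four topologies, the theorem is settled in every case once continuity of the deformation is checked in each.

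Concretely, I would define $H\colon[0,1]\times\mathcal{L}^n\to\mathcal{L}^n$ by
\begin{equation*}
H(s,\phi)(t)=\phi(0)+\frac{1}{1-s}\bigl[\phi\bigl((1-s)t\bigr)-\phi(0)\bigr]\quad\text{for }s\in[0,1),
\end{equation*}
and $H(1,\phi)(t)=\phi(0)+\phi'(0)\,t$. For $s<1$, $H(s,\phi)$ is $\phi$ precomposed with the affine reparametrization $t\mapsto(1-s)t$ and postcomposed with an affine self-diffeomorphism of $\mathbb{R}^n$, hence still a smooth embedding with polynomial components; at $s=1$ it is affine with nonzero direction vector $\phi'(0)$. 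Reading off coefficients,
\begin{equation*}
(H(s,\phi))_{i,0}=\phi_{i,0},\qquad (H(s,\phi))_{i,j}=(1-s)^{j-1}\phi_{i,j}\quad(j\geq1),
\end{equation*}
which shows that the formula glues continuously at $s=1$, that $H(0,\phi)=\phi$, that $H(1,\phi)\in\mathcal{E}^n$, and that $H(s,\phi)=\phi$ whenever $\phi\in\mathcal{E}^n$.

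The bulk of the work is checking continuity of $H$ in each of the four topologies on $\mathcal{L}^n$. Every $\phi\in\mathcal{L}^n$ has only finitely many nonzero coordinates, so at most finitely many coordinates of $H(s_0,\phi_0)$ are nonzero; on that finite set the scalar map $(s,x)\mapsto(1-s)^{j-1}x$ is jointly continuous, so any finite collection of coordinate conditions can be satisfied by choosing $(s,\phi)$ close to $(s_0,\phi_0)$. The ``tail'' coordinates, for which $\phi_{0,i,j}=0$, are controlled by the uniform estimate $|(H(s,\phi))_{i,j}|=(1-s)^{j-1}|\phi_{i,j}|\leq|\phi_{i,j}|$ valid for $s\in[0,1]$ and $j\geq1$: in the box topology, where a basic neighborhood of $H(s_0,\phi_0)$ may require every coordinate to lie in some window around zero, we simply shrink $\phi_{ij}$ into that window and let $H$ preserve the bound. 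The metric topologies $\mathcal{T}_r^n$ and $\mathcal{T}_\infty^n$ use the same estimate, and the product topology is the coarsest and so presents no additional difficulty.

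Finally, $\mathcal{E}^n$ is finite-dimensional, so all four subspace topologies restrict to the standard Euclidean topology on it; the map $(a_0,a_1)\mapsto a_1/|a_1|$ is a deformation retraction of $\mathcal{E}^n$ onto $S^{n-1}$. Composing with $H$ then gives $\mathcal{L}^n\simeq S^{n-1}$ in each of the four topologies. The principal hurdle is the continuity argument in the box topology, and the whole plan hinges on the fact that the multiplicative factor $(1-s)^{j-1}$ introduced by $H$ satisfies the uniform bound $\leq 1$ on $[0,1]$.
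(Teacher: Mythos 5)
Your proposal is correct and is essentially the paper's own argument: the paper uses the homotopy $H(s,\phi)_{ij}=s^{|j-1|}\phi_{ij}$ to retract $\mathcal{L}^n$ onto degree-one knots and then identifies that finite-dimensional space with $\mathbb{R}^n\setminus\{0\}\simeq S^{n-1}$, with the same continuity checks (joint continuity on the finitely many nonzero coordinates, the uniform bound by $1$ on the tail). The only cosmetic differences are the reversal of the parameter and that you keep the constant term, retracting onto affine knots $\cong\mathbb{R}^n\times(\mathbb{R}^n\setminus\{0\})$ rather than linear ones through the origin.
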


\begin{theorem}\label{th2}
The space $\mathcal{L}$ having any of the topologies $\mathcal{T}_b$, $\mathcal{T}_p$, $\mathcal{T}_r$ (for $r\geq1$) and $\mathcal{T}_\infty$ is contractible.
\end{theorem}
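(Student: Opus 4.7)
The plan is to build an explicit continuous contraction $H:[0,1]\times\mathcal{L}\to\mathcal{L}$ that deforms every polynomial knot into the linear unknot $e_1(t)=(t,0,0,\ldots)$, uniformly for all four topologies. The construction has two stages: first, use Theorem \ref{th1} to deformation retract $\mathcal{L}$ onto the subset $S^\infty\subset\mathcal{L}$ of linear unit polynomial knots; second, contract $S^\infty$ inside $\mathcal{L}$ via a shift-of-coordinates argument.

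For the first stage, Theorem \ref{th1} realizes each $\mathcal{L}^n\simeq S^{n-1}$ via a deformation retraction onto the subspace $S^{n-1}=\{\,t\mapsto(a_1 t,\ldots,a_n t):\sum_{i=1}^n a_i^2=1\,\}$ of linear unit knots. This description is compatible with the inclusion $\mathcal{L}^n\hookrightarrow\mathcal{L}^{n+1}$ (which sends $S^{n-1}$ to the equator of $S^n$), so the individual retractions patch together into a single deformation retraction $R:\mathcal{L}\to S^\infty:=\bigcup_n S^{n-1}$ in each of the four topologies.

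For the second stage, let $T$ be the coordinate shift $(a_1,a_2,\ldots)\mapsto(0,a_1,a_2,\ldots)$ on $\mathbb{R}^\infty$, and for $x\in S^\infty$ set $\phi_x(t)=x\cdot t$. Define
\begin{equation*}
\gamma(s,x)(t)=\begin{cases}\bigl[(1-2s)\,x+2s\,Tx\bigr]\cdot t,& s\in[0,1/2],\\ \bigl[(2-2s)\,Tx+(2s-1)\,e_1\bigr]\cdot t,& s\in[1/2,1].\end{cases}
\end{equation*}
Each slice $\gamma(s,x)$ is a linear knot whose direction vector $v(s)$ is nonzero: for the first leg, the equations $(1-2s)a_i+2s\,a_{i-1}=0$ for every $i$ (with the convention $a_0=0$) force $x=0$ by induction on $i$; for the second leg, the first coordinate of $v(s)$ equals $2s-1$, which vanishes only at $s=1/2$, where $v(1/2)=Tx\neq 0$. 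Hence $\gamma(s,x)\in\mathcal{L}$ throughout, $\gamma(0,x)=\phi_x$, and $\gamma(1,x)=e_1$. Concatenating $R$ with $s\mapsto\gamma(s,R(1,\phi))$ yields the desired contraction $H$.

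The main obstacle is checking that $H$ is continuous in each topology. Stage 2 is defined by linear formulas involving only finitely many coordinates of $x$ at each time, so its continuity in $\mathcal{T}_b$, $\mathcal{T}_p$, $\mathcal{T}_r$ and $\mathcal{T}_\infty$ is automatic. Stage 1 is the delicate point, especially in the box topology $\mathcal{T}_b$: one must verify that the deformation retractions produced by Theorem \ref{th1} patch across $n$ into a globally continuous map on $\mathcal{L}$. I expect this to follow from the explicit coordinate-wise nature of the homotopy used to prove Theorem \ref{th1} together with the finite-support property of elements of $\mathcal{L}$, which turns $\sup$-style conditions on countably many coordinates into finite conditions at each point.
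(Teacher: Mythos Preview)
Your two-stage strategy is essentially the paper's own: deform each polynomial knot to its linear part, then contract the space of nonzero linear directions via the coordinate-shift trick. The paper carries this out directly, establishing a homotopy equivalence $\mathcal{L}\simeq\mathcal{E}$ (Lemmas~\ref{lem3} and~\ref{lem7}) and then proving $\mathcal{E}$ contractible by exactly your shift homotopy (Lemma~\ref{lem10}). Your detour through $S^\infty$ via normalization is unnecessary but harmless; the skeleton of the argument is correct.

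The genuine gap is your treatment of continuity in the box topology. You assert that Stage~2 continuity ``is automatic'' because the formulas are linear and involve only finitely many coordinates of $x$. This is false for $\mathcal{T}_b$: each \emph{output} coordinate of $\gamma$ depends on finitely many input coordinates, but there are infinitely many output coordinates, and in the box topology continuity into a product is far stronger than componentwise continuity (e.g.\ $t\mapsto(t,t,t,\ldots)$ is not box-continuous from $\mathbb{R}$ to $\mathbb{R}^\infty$). The paper devotes most of Lemma~\ref{lem10} to precisely this point, constructing explicit box-neighborhoods by exploiting the finite support of elements of $\mathcal{E}$. Stage~1 has the same defect: you rightly flag box-continuity as ``the delicate point,'' but invoking Theorem~\ref{th1} as a black box does not resolve it. To see that the retractions patch, you must open up that proof---at which point you discover the global homotopy $h(s,\phi)=\bigl(s^{|j-1|}\phi_{ij}\bigr)_{i,j}$ is already defined on all of $\mathcal{L}$ (Lemma~\ref{lem7}), so the ``patching'' is tautological, but its box-continuity still requires the careful argument given there. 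In short, the architecture is right, but both stages need the explicit box-topology work that the paper supplies and that your proposal waves away.
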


Let us go through some lemmas that will be used to prove Theorem \ref{th1} and Theorem \ref{th2}.

\begin{lemma}\label{lem2}
Let $F:\mathbb{R}^\Lambda\to\mathbb{R}^\infty$ be defined as $\left(\phi_{ij}\right)_{i,j}\mapsto\left(x_i\right)_i$, where $x_i=\phi_{i1}$ for $i\in\mathbb{Z}^+$. Suppose $G:\mathbb{R}^\infty\to\mathbb{R}^\Lambda$ be defined as $\left(x_i\right)_i\mapsto\left(\phi_{ij}\right)_{i,j}$, where $\phi_{ij}=0^{\left|j-1\right|}x_i$ for $\left(i,j\right)\in\Lambda$ and $0^0=1$. Suppose $\mathbb{R}^\Lambda$ and $\mathbb{R}^\infty$ have the box topologies or both have the product topologies. Then the maps $F$ and $G$ are continuous.
\end{lemma}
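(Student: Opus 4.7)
The plan is to verify continuity in each of the two topologies separately, exploiting the fact that $F$ is essentially a projection (it reads off the coordinates indexed by $\{(i,1):i\in\mathbb{Z}^+\}$ and discards the rest) while $G$ is essentially an inclusion (it places $y_i$ into the $(i,1)$-slot and zeros into all other slots). Both maps therefore interact well with the product structure, so continuity reduces to computing preimages of basic open sets.

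For the product topologies, I would appeal to the universal property: a map into a space equipped with the product topology is continuous if and only if each coordinate projection composed with it is continuous. Writing $\pi_i:\mathbb{R}^\infty\to\mathbb{R}$ and $\pi_{ij}:\mathbb{R}^\Lambda\to\mathbb{R}$ for the projections, one checks that $\pi_i\circ F=\pi_{(i,1)}$, which is a coordinate projection of $\mathbb{R}^\Lambda$ and hence continuous; and $\pi_{ij}\circ G$ is equal to $\pi_i$ when $j=1$ and to the zero map when $j\neq1$, both of which are continuous. This disposes of the product-topology case for both $F$ and $G$.

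For the box topologies, I would argue directly on basic open sets. Given a basic box-open set $U=\prod_i U_i\subseteq\mathbb{R}^\infty$, the preimage $F^{-1}(U)$ is the box-open set $\prod_{(i,j)\in\Lambda}V_{ij}$ in $\mathbb{R}^\Lambda$ with $V_{i1}=U_i$ and $V_{ij}=\mathbb{R}$ for $j\neq1$, so $F$ is continuous. Given a basic box-open set $V=\prod_{(i,j)}V_{ij}\subseteq\mathbb{R}^\Lambda$, the preimage $G^{-1}(V)$ is either empty (if $0\notin V_{ij}$ for some $(i,j)$ with $j\neq1$) or is $\prod_i V_{i1}\subseteq\mathbb{R}^\infty$, both of which are box-open; so $G$ is continuous.

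There is no serious obstacle here: both maps are so rigid (a re-indexed projection and a padding by zeros) that in every case the preimage of a basic open set is itself a basic open set of the corresponding kind, or empty. The only mild subtlety is remembering that continuity in the box topology cannot be reduced to coordinatewise continuity, which is why the box case must be checked by a direct computation rather than via the product-topology argument.
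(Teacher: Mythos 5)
Your proof is correct and follows essentially the same route as the paper: for the box topology you compute preimages of basic open sets exactly as the paper does (including the case split for $G$ on whether $0\in V_{ij}$ for $j\neq1$), and for the product topology your appeal to the universal property is just a repackaging of the paper's observation that the same preimage computation lands in the product basis. No gaps.
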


\begin{proof}
(1) To prove $F$ is continuous: Let $U=\prod_i U_i$ be a basic open set in the box topology of $\mathbb{R}^\infty$ (i.e. $U_i$ is open in $\mathbb{R}$ for $i\in\mathbb{Z}^+$). One can see that $F^{-1}(U)=\prod_{i,j} U_{ij}$, where $U_{i1}=U_i$ for $i\in\mathbb{Z}^+$ and $U_{ij}=\mathbb{R}$ for $(i,j)\in\mathbb{Z}^+\times\mathbb{N}\setminus\{1\}$. Thus, $F^{-1}(U)$ is open in the box topology of $\mathbb{R}^\Lambda$. If $U_i=\mathbb{R}$ for all but finitely many $i\in\mathbb{Z}^+$, i.e. if $U$ is a basic open set in the product topology of $\mathbb{R}^\infty$, then $F^{-1}(U)$ is open in the product topology of $\mathbb{R}^\Lambda$.

(2) To prove $G$ is continuous: Let $V=\prod_{i,j} V_{ij}$ be a basic open set in the box topology of $\mathbb{R}^\Lambda$ (i.e. $V_{ij}$ is open in $\mathbb{R}$ for $(i,j)\in\Lambda$). If there is an $(i,j)\in\mathbb{Z}^+\times\mathbb{N}\setminus\{1\}$ such that $0\notin V_{ij}$, then $G^{-1}(V)$ is an empty set. If $0\in V_{ij}$ for all $(i,j)\in\mathbb{Z}^+\times\mathbb{N}\setminus\{1\}$, then $G^{-1}(V)=\prod_i V_i$, where $V_i=V_{i1}$ for $i\in\mathbb{Z}^+$. Thus, in either case, the set $G^{-1}(V)$ is open in the box topology of $\mathbb{R}^\infty$. If $V_{ij}=\mathbb{R}$ for all but finitely many $(i,j)\in\Lambda$, i.e. if $V$ is a basic open set in the product topology of $\mathbb{R}^\Lambda$, then $G^{-1}(V)$ is open in the product topology of $\mathbb{R}^\infty$.
\end{proof}

\begin{notation}
Let $\mathcal{E}=\left\{\left(x_i\right)_i\in\mathbb{R}^\infty\setminus\{0\}\hskip0.2mm\left|\right.x_i=0\;\text{for all but finitely many}\:\,i\in\mathbb{Z}^+\right\}$, and for $n\geq1$, let $\mathcal{E}^n=\left\{\left(x_i\right)_i\in\mathbb{R}^\infty\setminus\{0\}\hskip0.2mm\left|\right.x_i=0\:\,\text{for all}\:\,i>n\right\}$.
\end{notation}

We have the subspace topologies on $\mathcal{E}$ and $\mathcal{E}^n$ that inherit from the box and the product topologies of $\mathbb{R}^\infty$. Let $r\geq1$ be a real number and let $\rho_r$ be a metric on $\mathcal{E}$ defined as
\begin{equation}
\rho_r(x, y)=\left(\sum_i\left|x_i-y_i\right|^r\right)^{1/r}
\end{equation}
\noindent for $x,y\in\mathcal{E}$. This metric induces a topology on $\mathcal{E}$, i.e. the topology is generated by open balls of the type $C_r(x,\delta)=\left\{y\in\mathcal{E}:\rho_r(x,y)<\delta\right\}$ for $x\in\mathcal{E}$ and $\delta>0$. We have another metric $\rho_\infty$ on $\mathcal{E}$ defined as
\begin{equation}
\rho_\infty(x,y)=\sup_i\left|x_i-y_i\right|
\end{equation}
\noindent for $x,y\in\mathcal{E}$. Let $C_\infty(x,\delta)$ be an open ball in the metric space $(\mathcal{E},\rho_\infty)$ centered at $x\in\mathcal{E}$ and of radius $\delta>0$. This type of balls generate a topology on $\mathcal{E}$. Since $\mathcal{E}^n$ (for $n\geq1$) is a subset of $\mathcal{E}$, the set $\mathcal{E}^n$ has the topologies induced by the metrics $\rho_r$ and $\rho_\infty$ when restricted to it. We denote the topologies on $\mathcal{E}$ and $\mathcal{E}^n$ as in Table \ref{tbl2}.

\begin{table}[H]
\caption{Topologies on $\mathcal{E}$ and $\mathcal{E}^n$}\label{tbl2}
\centering
\begin{tabular}{|p{61mm}| p{26mm} | p{26mm}|}\hline
Description & Notations for the topologies on $\mathcal{E}$ & Notations for the topologies on $\mathcal{E}^n$\\ \hline
The subspace topology that inherit from the box topology of $\mathbb{R}^\infty$ & $\mathcal{S}_b$ & $\mathcal{S}_b^n$\\ \hline
The subspace topology that inherit from the product topology of $\mathbb{R}^\infty$ & $\mathcal{S}_p$ & $\mathcal{S}_p^n$\\ \hline
The metric topology induced by the metric $\rho_r$ on $\mathcal{E}$ (respectively by $\rho_r|_{\mathcal{E}^n}$) & $\mathcal{S}_r$ & $\mathcal{S}_r^n$\\ \hline
The metric topology induced by the metric $\rho_\infty$ on $\mathcal{E}$ (respectively by $\rho_\infty|_{\mathcal{E}^n}$) & $\mathcal{S}_\infty$ & $\mathcal{S}_\infty^n$\\ \hline
\end{tabular}
\end{table}

\begin{remark}
One can check that $F(\mathcal{L}^n)\subseteq\mathcal{E}^n$, $F(\mathcal{L})\subseteq\mathcal{E}$, $G(\mathcal{E}^n)\subseteq\mathcal{L}^n$ and $G(\mathcal{E})\subseteq\mathcal{L}$, where $F$ and $G$ be the maps defined in Lemma \ref{lem2}.
\end{remark}

\begin{lemma}\label{lem3}
Let $f:\mathcal{L}\to\mathcal{E}$ and $g:\mathcal{E}\to\mathcal{L}$ be the restrictions of the maps $F$ and $G$ respectively (where $F$ and $G$ be the maps defined in Lemma \ref{lem2}). Suppose $\mathcal{E}$ and $\mathcal{L}$ have the topologies
\begin{enumerate}[(1)]
\item $\mathcal{S}_b$ and $\mathcal{T}_b$ respectively, or
\item $\mathcal{S}_p$ and $\mathcal{T}_p$ respectively, or
\item $\mathcal{S}_r$ and $\mathcal{T}_r$ (for $r\geq1$) respectively, or
\item $\mathcal{S}_\infty$ and $\mathcal{T}_\infty$ respectively.
\end{enumerate}
Then $f$ and $g$ are continuous.
\end{lemma}

For cases (1) and (2), the proof of Lemma \ref{lem3} is straight forward from Lemma \ref{lem2}. For cases (3) and (4), it is easy to check that $f$ and $g$ are Lipschitz continuous.

With the same type of topologies (i.e. the topologies having the same subscripts as in Table \ref{tbl1} and Table \ref{tbl2}), the spaces $\mathcal{L}^n$ and $\mathcal{E}^n$ are subspaces of the spaces $\mathcal{L}$ and $\mathcal{E}$ respectively. The next lemma follows immediately from Lemma \ref{lem3}.

\begin{lemma}\label{lem4}
Let $f_n:\mathcal{L}^n\to\mathcal{E}^n$ and $g_n:\mathcal{E}^n\to\mathcal{L}^n$ be the restrictions of the maps $F$ and $G$ respectively (where $F$ and $G$ be the maps defined in Lemma \ref{lem2}). Suppose $\mathcal{E}^n$ and $\mathcal{L}^n$ have the topologies
\begin{enumerate}[(1)]
\item $\mathcal{S}_b^n$ and $\mathcal{T}_b^n$ respectively, or
\item $\mathcal{S}_p^n$ and $\mathcal{T}_p^n$ respectively, or
\item $\mathcal{S}_r^n$ and $\mathcal{T}_r^n$ (for $r\geq1$) respectively, or
\item $\mathcal{S}_\infty^n$ and $\mathcal{T}_\infty^n$ respectively.
\end{enumerate}
Then $f_n$ and $g_n$ are continuous.
\end{lemma}

\begin{lemma}\label{lem5}
Let $H:\left[0,1\right]\times\mathbb{R}^\Lambda\to\mathbb{R}^\Lambda$ be defined as
\begin{equation*}
\left(s,\,\left(\phi_{ij}\right)_{i,j}\right)\mapsto\left(\psi_{ij}\right)_{i,j}\,,
\end{equation*}
\noindent where $\psi_{ij}=s^{\left|j-1\right|}\phi_{ij}$ and $0^0=1$. Suppose $\mathbb{R}^\Lambda$ has the product topology. Then $H$ is continuous.
\end{lemma}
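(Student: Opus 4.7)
The plan is to invoke the universal property of the product topology: a map into $\mathbb{R}^\Lambda$ with the product topology is continuous if and only if each coordinate projection of it is continuous. So it suffices to show that for every $(i,j) \in \Lambda$ the composition $\pi_{ij} \circ H : [0,1] \times \mathbb{R}^\Lambda \to \mathbb{R}$, given by $(s, (x_{kl})_{k,l}) \mapsto s^{|j-1|}x_{ij}$, is continuous.

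To verify this, I would factor $\pi_{ij} \circ H$ as the composition
\begin{equation*}
[0,1] \times \mathbb{R}^\Lambda \xrightarrow{\;\mathrm{id} \times \pi_{ij}\;} [0,1] \times \mathbb{R} \xrightarrow{\;\mu_j\;} \mathbb{R},
\end{equation*}
where the first arrow is the product of the identity on $[0,1]$ with the coordinate projection $\pi_{ij} : \mathbb{R}^\Lambda \to \mathbb{R}$ (continuous by definition of the product topology on $\mathbb{R}^\Lambda$), and $\mu_j(s,y) = s^{|j-1|} y$. The map $\mu_j$ is continuous because when $j = 1$ the convention $0^0 = 1$ makes $\mu_1(s,y) = y$, and when $j \neq 1$ the exponent $|j-1|$ is a positive integer so $\mu_j$ is a genuine polynomial in the two real variables $s$ and $y$. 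Either way $\mu_j$ is continuous, and the composition of continuous maps is continuous, so $\pi_{ij} \circ H$ is continuous, proving the lemma.

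There is no real obstacle here; the only point requiring any care is the $j = 1$ case, where the convention $0^0 = 1$ must be interpreted so that $s \mapsto s^0$ is the constant function $1$ on all of $[0,1]$ (including at $s=0$), which makes $\mu_1$ continuous. The proof does not work verbatim for the box topology, but the statement only claims continuity with respect to the product topology, so the universal-property argument suffices.
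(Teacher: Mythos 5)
Your proof is correct and follows essentially the same route as the paper: the paper also reduces to continuity of the coordinate functions via the universal property of the product topology, writing each as $M_j\circ Q_{ij}$ where $Q_{ij}$ is your $\mathrm{id}\times\pi_{ij}$ and $M_j$ is your $\mu_j$. Your explicit remark about the $j=1$ case and the convention $0^0=1$ is a small point of care that the paper leaves implicit.
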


\begin{proof}
Let $P_1:\left[0,1\right]\times\mathbb{R}^\Lambda\to\left[0,1\right]$ and $P_2:\left[0,1\right]\times\mathbb{R}^\Lambda\to\mathbb{R}^\Lambda$ be the projections onto the first and second coordinates respectively. For $(i,j)\in\Lambda$, let $P_{ij}:\mathbb{R}^\Lambda\to\mathbb{R}$ be the projection onto the $(i,j)^{\text{th}}$ coordinate. Note that the projections $P_1$, $P_2$ and $P_{ij}$, for $(i,j)\in\Lambda$, are continuous. For $(i,j)\in\Lambda$, let $Q_{ij}:\left[0,1\right]\times\mathbb{R}^\Lambda\to\left[0,1\right]\times\mathbb{R}$ be defined as
\begin{equation}
Q_{ij}(s,\phi)=\left(P_1(s,\phi),P_{ij}(P_2(s,\phi))\right)
\end{equation}
\noindent for $(s,\phi)\in\left[0,1\right]\times\mathbb{R}^\Lambda$. This map is continuous, since its component functions $P_1$ and $P_{ij}\circ P_2$ are continuous. For $j\in\mathbb{N}$, let $M_j:\left[0,1\right]\times\mathbb{R}\to\mathbb{R}$ be defined as $(s,t)\mapsto s^{\left|j-1\right|}t$. Note that this map is continuous. It can be checked that 
\begin{equation}
H(s,\phi)=\left(M_j(Q_{ij}(s,\phi))\right)_{i,j}
\end{equation}
\noindent for $(s,\phi)\in\left[0,1\right]\times\mathbb{R}^\Lambda$, where the tuple on the right has its $(i,j)^{\text{th}}$ coordinate $M_j(Q_{ij}(s,\phi))$ for $(i,j)\in\Lambda$. This shows that $H$ is continuous, since its component functions $M_j\circ Q_{ij}$, for $(i,j)\in\Lambda$, are continuous.
\end{proof}

\begin{lemma}\label{lem6}
Let $H$ be the map defined in Lemma \ref{lem5}. Then $H\left(\left[0,1\right]\times\mathcal{L}^n\right)\subseteq\mathcal{L}^n$ (for $n\in\mathbb{Z}^+$) and $H\left(\left[0,1\right]\times\mathcal{L}\right)\subseteq\mathcal{L}$.
\end{lemma}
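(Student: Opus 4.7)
The plan is to reduce the inclusion statement to an embedding check. Fix $\phi \in \mathcal{L}$ and $s \in [0,1]$, and set $\psi = H(s,\phi)$; the coefficients $\psi_{ij} = s^{|j-1|}\phi_{ij}$ inherit both the finite-support condition and the constraint $\psi_{ij}=0$ for $i>n$ from $\phi$ (when $\phi \in \mathcal{L}^n$), so the only nontrivial task is to show that the map $\mathbb{R}\to\mathbb{R}^\infty$, $t\mapsto\sum_j(\psi_{ij})_i\,t^j$, is a smooth embedding.

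The key step is to derive a closed-form description of $\psi$. Matching coefficients of $t^j$ individually for $j=0$, $j=1$, and $j\geq2$ gives, for every $s > 0$,
\begin{equation*}
\psi(t) \;=\; \tfrac{1}{s}\bigl[\phi(st) - \phi(0)\bigr] + s\,\phi(0).
\end{equation*}
This exhibits $\psi$ as the composition $A_s \circ \phi \circ \mu_s$, where $\mu_s: \mathbb{R}\to\mathbb{R}$ is the diffeomorphism $t \mapsto st$ and $A_s: \mathbb{R}^\infty\to\mathbb{R}^\infty$ is the affine automorphism $y \mapsto \tfrac{1}{s}(y - \phi(0)) + s\phi(0)$. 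Since $\phi$ is a smooth embedding by hypothesis and both $\mu_s$ and $A_s$ are diffeomorphisms, $\psi$ is a smooth embedding.

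For the boundary case $s = 0$, the formula collapses to $H(0,\phi)(t) = (\phi_{i1}\,t)_i = \phi'(0)\,t$, a linear map whose injectivity and non-vanishing derivative reduce to the condition $\phi'(0) \neq 0$; this holds because $\phi$ is an immersion by virtue of being a polynomial knot. Combining the two cases yields $H([0,1]\times\mathcal{L}) \subseteq \mathcal{L}$, and the parallel observation that $\psi_{ij}$ vanishes whenever $\phi_{ij}$ does (so the degree constraint for $\mathcal{L}^n$ is preserved) gives $H([0,1]\times\mathcal{L}^n) \subseteq \mathcal{L}^n$ simultaneously. I do not foresee any real obstacle: the only step requiring care is the coefficient-by-coefficient verification of the closed form, after which everything else is a one-line consequence, with the degenerate value $s=0$ handled by a short separate argument.
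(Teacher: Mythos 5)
Your proof is correct and rests on the same identity as the paper's: for $s>0$ the knot $H(s,\phi)$ agrees with $\phi(st)$ up to an affine automorphism of the target (the paper writes this as $\psi(u)-\psi(v)=s^{-1}(\phi(su)-\phi(sv))$ and $\psi'(t)=\phi'(st)$ rather than as the closed form $\psi=A_s\circ\phi\circ\mu_s$, but the content is identical), and both treat $s=0$ separately using $\phi'(0)\neq0$. Your packaging as a composition with diffeomorphisms is a slightly cleaner way of saying the same thing; there is no gap.
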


\begin{proof}
Since $\mathcal{L}=\bigcup_{n\in\mathbb{Z}^+}\mathcal{L}^n$, it is enough to show that $H\left(\left[0,1\right]\times\mathcal{L}^n\right)\subseteq\mathcal{L}^n$ for $n\in\mathbb{Z}^+$. Suppose a positive integer $n$ be given. Let $(s,\phi)\in\left[0,1\right]\times\mathcal{L}^n$. Define $\psi:\mathbb{R}\to\mathbb{R}^\infty$ by
\begin{equation}
t\mapsto\sum_js^{\left|j-1\right|}\left(\phi_{ij}\right)_it^j
\end{equation}
\noindent where $\phi_{ij}$ is the coefficient of $t^j$ in the $i^{\text{th}}$ component of $\phi$ for $(i,j)\in\Lambda$. Since $\phi_{ij}=0$ for all but finitely many $(i,j)\in\Lambda$ and $\phi_{ij}=0$ whenever $i>n$, we have $\psi_{ij}=s^{\left|j-1\right|}\phi_{ij}=0$ for all but finitely many $(i,j)\in\Lambda$ and $\psi_{ij}=s^{\left|j-1\right|}\phi_{ij}=0$ whenever $i>n$. If $s\neq0$, then $\psi(u)-\psi(v)=\sum_{j\geq1}s^{\left|j-1\right|}\left(\phi_{ij}\right)_i(u^j-v^j)=s^{-1}\sum_{j\geq1}\left(\phi_{ij}\right)_i\left((su)^j-(sv)^j\right)=s^{-1}(\phi(su)-\phi(sv))\neq0$ for $u\neq v$ and $\psi'(t)=\sum_{j\geq1} j\left(\phi_{ij}\right)_i(st)^{j-1}=\phi'(st)\neq0$ for $t\in\mathbb{R}$ (since $\phi$ is a polynomial knot). Thus, if $s\neq0$, then $\psi$ is a polynomial knot. If $s=0$, then $\psi(t)=\left(\phi_{i1}t\right)_i$ for $t\in\mathbb{R}$, and hence $\psi$ is a linear knot (since $\phi_{i1}\neq0$ for some $i\leq n$). In either case, $\psi\in\mathcal{L}^n$. One can see that $H(s,\phi)=\psi$, and thus $H(s,\phi)\in\mathcal{L}^n$. This shows that $H\left(\left[0,1\right]\times\mathcal{L}^n\right)\subseteq\mathcal{L}^n$.
\end{proof}

By Lemma \ref{lem6}, the restriction of $H$ to $\left[0,1\right]\times\mathcal{L}$ has its image contained in $\mathcal{L}$. With this in mind, we have the following: 

\begin{lemma}\label{lem7}
Let $h:\left[0,1\right]\times\mathcal{L}\to\mathcal{L}$ be the restriction of the map $H$ defined in Lemma \ref{lem5}. Suppose $\mathcal{L}$ has any of the topologies $\mathcal{T}_b$, $\mathcal{T}_p$, $\mathcal{T}_r$ (for $r\geq1$) and $\mathcal{T}_\infty$. Then $h$ is continuous.
\end{lemma}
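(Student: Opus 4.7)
The plan is to handle each of the four topologies separately, using Lemmas \ref{lem5} and \ref{lem6} where possible and exploiting the defining property of $\mathcal{L}$ (that each element has only finitely many nonzero coordinates) for the cases Lemma \ref{lem5} does not cover.

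For the product topology $\mathcal{T}_p$, continuity is immediate: by Lemma \ref{lem5} the map $H$ is continuous on $[0,1]\times\mathbb{R}^\Lambda$ with the product topology, by Lemma \ref{lem6} it restricts to a map into $\mathcal{L}$, and $\mathcal{T}_p$ is the subspace topology. So $h$ is continuous as a composition of the inclusion, $H$, and a corestriction.

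For the metric topologies $\mathcal{T}_r$ and $\mathcal{T}_\infty$, I would fix $(s_0,\phi)\in[0,1]\times\mathcal{L}$ and use the algebraic identity
\begin{equation*}
s^{|j-1|}\phi'_{ij}-s_0^{|j-1|}\phi_{ij}=s^{|j-1|}(\phi'_{ij}-\phi_{ij})+\left(s^{|j-1|}-s_0^{|j-1|}\right)\phi_{ij},
\end{equation*}
followed by the triangle inequality for $\ell^r$ or the sup. Since $s\in[0,1]$ gives $s^{|j-1|}\leq 1$, the first piece is dominated uniformly in $(i,j)$ by $d_r(\phi,\phi')$ (respectively $d_\infty(\phi,\phi')$) and therefore is controlled by closeness in $\mathcal{L}$. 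The second piece involves only the finitely many indices $(i,j)\in\Lambda_\phi:=\{(i,j):\phi_{ij}\neq 0\}$, so it is a finite sum (or finite sup) of terms of the form $|s^{|j-1|}-s_0^{|j-1|}|\,|\phi_{ij}|$, each of which tends to $0$ as $s\to s_0$ by continuity of scalar exponentiation. Choosing $s$ close enough to $s_0$ and $\phi'$ close enough to $\phi$ in the appropriate metric makes both pieces less than $\varepsilon/2$, giving continuity.

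For the box topology $\mathcal{T}_b$, the argument is similar in spirit but combinatorial. Given a basic open neighborhood $V=\mathcal{L}\cap\prod_{i,j}V_{ij}$ of $h(s_0,\phi)$, for $(i,j)\notin\Lambda_\phi$ we have $h(s_0,\phi)_{ij}=0\in V_{ij}$, so I can pick $\delta_{ij}>0$ with $(-\delta_{ij},\delta_{ij})\subseteq V_{ij}$; the key observation is that because $s\in[0,1]$, the inequality $|\phi'_{ij}|<\delta_{ij}$ automatically forces $|s^{|j-1|}\phi'_{ij}|<\delta_{ij}$. For the finitely many $(i,j)\in\Lambda_\phi$, joint continuity of $(s,x)\mapsto s^{|j-1|}x$ at $(s_0,\phi_{ij})$ supplies $\epsilon_{ij},\eta_{ij}>0$ controlling $s$ and $\phi'_{ij}$. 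Assembling $W_{ij}=(\phi_{ij}-\eta_{ij},\phi_{ij}+\eta_{ij})$ for $(i,j)\in\Lambda_\phi$ and $W_{ij}=(-\delta_{ij},\delta_{ij})$ otherwise gives a box-open neighborhood $W$ of $\phi$, and $\epsilon:=\min_{\Lambda_\phi}\epsilon_{ij}$ (a finite min) controls $s$; then $([0,1]\cap(s_0-\epsilon,s_0+\epsilon))\times W$ is mapped into $V$ by $h$.

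The main obstacle is the box topology step: a naive attempt using continuity of $H$ on all of $\mathbb{R}^\Lambda$ fails because box-topology continuity typically requires uniform control across all coordinates simultaneously. What rescues the argument is the special structure of $\mathcal{L}$ (only finitely many nonzero $\phi_{ij}$) combined with the bound $s^{|j-1|}\leq 1$ on $[0,1]$, which lets a single $\delta_{ij}$ constraint on $\phi'_{ij}$ transport unchanged through multiplication by $s^{|j-1|}$ for the infinitely many coordinates outside $\Lambda_\phi$.
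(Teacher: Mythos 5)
Your proposal is correct and follows essentially the same route as the paper: the product case is dispatched via Lemma \ref{lem5}, the metric cases use the same add-and-subtract decomposition (bounding one piece by $d_r(\phi,\phi')$ via $s^{\left|j-1\right|}\leq1$ and the other by a finite sum over $\Lambda_\phi$), and the box case uses exactly the paper's key observation that outside the finite set $\Lambda_\phi$ the bound $s^{\left|j-1\right|}\leq1$ transports the constraint $\left|\phi'_{ij}\right|<\delta_{ij}$ unchanged. The only cosmetic difference is that you take a finite minimum of per-coordinate $\epsilon_{ij}$ over $\Lambda_\phi$ where the paper extracts a single uniform $\epsilon$ and $\delta$; these are equivalent.
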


\begin{proof}
To prove the lemma, we consider the cases as follows:

(1) If $\mathcal{L}$ has the topology $\mathcal{T}_b$: Let $U=\mathcal{L}\bigcap\prod_{i,j} U_{ij}$ be a basic open set in $\left(\mathcal{L},\mathcal{T}_b\right)$. We show that $h^{-1}(U)$ is open in $\left[0,1\right]\times\mathcal{L}$. Let $(s,\phi)\in h^{-1}(U)$ be given. It is enough to show that $(s,\phi)\in V\times W\subseteq h^{-1}(U)$ for some open set $V$ in $\left[0,1\right]$ and an open set $W$ in $\mathcal{L}$. Since $h(s,\phi)=\left(s^{\left|j-1\right|}\phi_{ij}\right)_{i,j}$ and $h(s,\phi)\in U$, $s^{\left|j-1\right|}\phi_{ij}\in U_{ij}$ for $(i,j)\in\Lambda$. Let $\Lambda_\phi=\left\{(i,j)\in\Lambda:\phi_{ij}\neq0\right\}$. Note that $\Lambda_\phi$ is finite set. Choose an $\epsilon>0$ such that $\left(s^{\left|j-1\right|}\phi_{ij}-\epsilon,\hskip0.2mm s^{\left|j-1\right|}\phi_{ij}+\epsilon\right)\subseteq U_{ij}$ for all $(i,j)\in\Lambda_\phi$. Since $s^{\left|j-1\right|}\phi_{ij}=0$ for $(i,j)\in\Lambda\setminus\Lambda_\phi$, $0\in U_{ij}$ for all $(i,j)\in\Lambda\setminus\Lambda_\phi$. For $(i,j)\in\Lambda\setminus\Lambda_\phi$, let $\epsilon_{ij}>0$ be such that $\left(-\epsilon_{ij},\epsilon_{ij}\right)\subseteq U_{ij}$. Let $m=\max\left\{j\in\mathbb{N}:(i,j)\in\Lambda_\phi\;\text{for some}\;i\in\mathbb{Z}^+\right\}$ and $M=\max\left\{\left|\phi_{ij}\right|:\,(i,j)\in\Lambda_\phi\right\}$. We can find a $\delta>0$ such that
\begin{equation}\label{eq1}
\left|s^{\left|j-1\right|}-t^{\left|j-1\right|}\right|<\frac{\epsilon}{3M}\qquad\text{whenever}\;\left|s-t\right|<\delta\;\text{and}\;j\leq m
\end{equation}
\noindent Let $V=\left(s-\delta, s+\delta\right)\cap\left[0,1\right]$ and $W=\mathcal{L}\bigcap\prod_{i,j} W_{ij}$, where $W_{ij}=\left(\phi_{ij}-\epsilon/3,\phi_{ij}+\epsilon/3\right)$ for $(i,j)\in\Lambda_\phi$ and $W_{ij}=\left(-\epsilon_{ij},\epsilon_{ij}\right)$ for $(i,j)\in\Lambda\setminus\Lambda_\phi$. Note that $(s,\phi)\in V\times W$. Let $(t,\psi)\in V\times W$ be given. By Expression (\ref{eq1}),
\begin{equation} 
\left|s^{\left|j-1\right|}\phi_{ij}-t^{\left|j-1\right|}\psi_{ij}\right|\leq\left|s^{\left|j-1\right|}-t^{\left|j-1\right|}\right|\left|\phi_{ij}\right|+t^{\left|j-1\right|}\left|\phi_{ij}-\psi_{ij}\right|<\epsilon
\end{equation}
\noindent for $(i,j)\in\Lambda_\phi$, since $\left|s-t\right|<\delta$ and $j\leq m$. Thus, $t^{\left|j-1\right|}\psi_{ij}\in U_{ij}$ for $(i,j)\in\Lambda_\phi$. Also, $t^{\left|j-1\right|}\psi_{ij}\in\left(-\epsilon_{ij},\epsilon_{ij}\right)\subseteq U_{ij}$ for $(i,j)\in\Lambda\setminus\Lambda_\phi$. Therefore, $h(t,\psi)=\left(t^{\left|j-1\right|}\psi_{ij}\right)_{i,j}\in U$, i.e. $(t,\psi)\in h^{-1}(U)$. This proves that $V\times W\subseteq h^{-1}(U)$.

(2) If $\mathcal{L}$ has the topology $\mathcal{T}_p$: Since $\mathcal{T}_p$ is the subspace topology inherited by the product topology of $\mathbb{R}^\Lambda$, the map $h$ (being the restriction of $H$) is obviously continuous by Lemma \ref{lem5}.

(3) If $\mathcal{L}$ has the topology $\mathcal{T}_r$: Suppose $(s,\phi)\in\left[0,1\right]\times\mathcal{L}$. We show that $h$ is continuous at $(s,\phi)$. Let $\epsilon>0$ be given. Let $\delta=\epsilon/3$, $m=\max\left\{j\in\mathbb{N}:\phi_{ij}\neq0\;\text{for some}\;i\in\mathbb{Z}^+\right\}$ and $R=\left(\sum_{i\geq1}\sum_{j\leq m}\left|\phi_{ij}\right|^r\right)^{1/r}$. Choose a $\delta'>0$ such that 
\begin{equation}\label{eq2}
\left|s^{\left|j-1\right|}-t^{\left|j-1\right|}\right|<\frac{\delta}{R}\qquad\text{whenever}\;\left|s-t\right|<\delta'\;\text{and}\;j\leq m
\end{equation}
\noindent Since $\phi_{ij}=0$ for $i\geq1$ and $j>m$, using Expression (\ref{eq2}), we have
{\small\begin{equation}
d_r(h(s,\phi),h(t,\phi))\leq\left(\sum_{i\geq1}\sum_{j\leq m}\left|s^{\left|j-1\right|}-t^{\left|j-1\right|}\right|^r\left|\phi_{ij}\right|^r\right)^{1/r}\leq\frac{\delta}{R}\left(\sum_{i\geq1}\sum_{j\leq m}\left|\phi_{ij}\right|^r\right)^{1/r}\leq\frac{\epsilon}{3}
\end{equation}}
\noindent whenever $\left|s-t\right|<\delta'$. Also,
{\small\begin{equation}
d_r(h(t,\phi),h(t,\psi))\leq\left(\sum_{i,j}t^{\left|j-1\right|r}\left|\phi_{ij}-\psi_{ij}\right|^r\right)^{1/r}\leq\left(\sum_{i,j}\left|\phi_{ij}-\psi_{ij}\right|^r\right)^{1/r}\leq\frac{\epsilon}{3}
\end{equation}}
\noindent whenever $d_r(\phi,\psi)<\delta$. Therefore,
\begin{equation}
d_r(h(s,\phi),h(t,\psi))\leq d_r(h(s,\phi),h(t,\phi))+d_r(h(t,\phi),h(t,\psi))<\epsilon
\end{equation}
\noindent whenever $\left|s-t\right|<\delta'$ and $d_r(\phi,\psi)<\delta$. This proves that $h$ is continuous at $(s,\phi)$.

(4) If $\mathcal{L}$ has the topology $\mathcal{T}_\infty$: By slight modifications in the previous case, it can be shown that $h$ is continuous in the present case.
\end{proof}

The restriction of $H$ to $\left[0,1\right]\times\mathcal{L}^n$ has its image contained in $\mathcal{L}^n$ (see Lemma \ref{lem6}). Since $\mathcal{T}_b^n$, $\mathcal{T}_p^n$, $\mathcal{T}_r^n$ (for $r\geq1$) and $\mathcal{T}_\infty^n$ are the subspace topologies that inherit from the topologies $\mathcal{T}_b$, $\mathcal{T}_p$, $\mathcal{T}_r$ and $\mathcal{T}_\infty$ respectively, we have an immediate consequence of Lemma \ref{lem7} as follows:

\begin{lemma}\label{lem8}
Let $h_n:\left[0,1\right]\times\mathcal{L}^n\to\mathcal{L}^n$ be the restriction of the map $H$ defined in Lemma \ref{lem5}. Suppose $\mathcal{L}^n$ has any of the topologies $\mathcal{T}_b^n$, $\mathcal{T}_p^n$, $\mathcal{T}_r^n$ (for $r\geq1$) and $\mathcal{T}_\infty^n$. Then $h_n$ is continuous.
\end{lemma}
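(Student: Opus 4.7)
The plan is to deduce Lemma \ref{lem8} directly as a corollary of Lemma \ref{lem7}, exactly along the lines the paper sets up in the paragraph immediately preceding the statement. No new estimate is needed; the content is purely topological bookkeeping.

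The first step is to check that $h_n$ is well-defined. By Lemma \ref{lem6}, $H\left(\left[0,1\right]\times\mathcal{L}^n\right)\subseteq\mathcal{L}^n$, so the restriction of $H$ to $\left[0,1\right]\times\mathcal{L}^n$ does land in $\mathcal{L}^n$, and this restriction is $h_n$.

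The second step is to verify that each topology on $\mathcal{L}^n$ from Table \ref{tbl1} is the subspace topology inherited from the matching topology on $\mathcal{L}$. For $\mathcal{T}_b^n$ and $\mathcal{T}_p^n$ this follows from transitivity of the subspace construction applied to $\mathcal{L}^n\subseteq\mathcal{L}\subseteq\mathbb{R}^\Lambda$. For $\mathcal{T}_r^n$ and $\mathcal{T}_\infty^n$ this is immediate from the definitions, since by construction the metrics on $\mathcal{L}^n$ are $d_r|_{\mathcal{L}^n}$ and $d_\infty|_{\mathcal{L}^n}$, and the metric topology of a restricted metric coincides with the subspace topology induced by the ambient metric. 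Consequently, the inclusion $\iota:\mathcal{L}^n\hookrightarrow\mathcal{L}$ is continuous in each of the four pairings, and therefore so is $\mathrm{id}_{[0,1]}\times\iota:[0,1]\times\mathcal{L}^n\to[0,1]\times\mathcal{L}$.

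The third step is to compose. By Lemma \ref{lem7}, $h:[0,1]\times\mathcal{L}\to\mathcal{L}$ is continuous in each of the four cases, hence so is $h\circ(\mathrm{id}_{[0,1]}\times\iota):[0,1]\times\mathcal{L}^n\to\mathcal{L}$. By Lemma \ref{lem6} this composition actually takes values in $\mathcal{L}^n$, and it agrees there with $h_n$. Since the topology on $\mathcal{L}^n$ is the subspace topology from $\mathcal{L}$, the universal property of the subspace topology (a map into a subspace is continuous iff it is continuous as a map into the ambient space) gives continuity of $h_n:[0,1]\times\mathcal{L}^n\to\mathcal{L}^n$ in all four topologies simultaneously.

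I do not foresee a genuine obstacle: the only substantive point is the identification in the second step, and for the two non-metric topologies this reduces to transitivity of subspace topologies, while for the metric topologies it is built into their definitions. The heavy lifting was already done in Lemma \ref{lem6} (to keep us inside $\mathcal{L}^n$) and Lemma \ref{lem7} (to get continuity at the level of $\mathcal{L}$).
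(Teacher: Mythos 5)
Your proposal is correct and follows exactly the route the paper takes: the paper likewise presents Lemma \ref{lem8} as an immediate consequence of Lemma \ref{lem7}, citing Lemma \ref{lem6} for well-definedness and the fact that $\mathcal{T}_b^n$, $\mathcal{T}_p^n$, $\mathcal{T}_r^n$ and $\mathcal{T}_\infty^n$ are the subspace topologies inherited from $\mathcal{T}_b$, $\mathcal{T}_p$, $\mathcal{T}_r$ and $\mathcal{T}_\infty$. You have merely spelled out the bookkeeping (the inclusion map and the universal property of the subspace topology) that the paper leaves implicit.
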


\begin{lemma}\label{lem9}
The space $\mathcal{E}^n$ having any of the topologies $\mathcal{S}_b^n$, $\mathcal{S}_p^n$, $\mathcal{S}_r^n$ (for $r\geq1$) and $\mathcal{S}_\infty^n$ is homeomorphic to $\mathbb{R}^n\setminus\{0\}$.
\end{lemma}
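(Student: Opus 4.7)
The plan is to exhibit a single bijection that serves as the homeomorphism for all four topologies simultaneously. Define $\eta : \mathcal{E}^n \to \mathbb{R}^n \setminus \{0\}$ by $\eta((x_i)_i) = (x_1, \ldots, x_n)$; this is well defined because membership in $\mathcal{E}^n$ forces $x_i = 0$ for $i > n$ while requiring some $x_i$ with $i \le n$ to be nonzero. Its inverse sends $(y_1, \ldots, y_n)$ to $(y_1, \ldots, y_n, 0, 0, \ldots)$, which lies in $\mathcal{E}^n$. I will show that each of the topologies $\mathcal{S}_b^n$, $\mathcal{S}_p^n$, $\mathcal{S}_r^n$, $\mathcal{S}_\infty^n$ coincides, under $\eta$, with the standard Euclidean topology on $\mathbb{R}^n \setminus \{0\}$, from which the four homeomorphism statements follow at once.

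For the metric topologies the reduction is immediate: if $x, y \in \mathcal{E}^n$, then $x_i - y_i = 0$ for all $i > n$, so
\begin{equation*}
\rho_r(x, y) = \left(\sum_{i=1}^{n} |x_i - y_i|^r\right)^{1/r} \quad \text{and} \quad \rho_\infty(x, y) = \max_{1 \le i \le n} |x_i - y_i|.
\end{equation*}
Under $\eta$ these become the usual $\ell^r$ and $\ell^\infty$ metrics on $\mathbb{R}^n \setminus \{0\}$, which are well known to induce the Euclidean topology; hence $\mathcal{S}_r^n$ and $\mathcal{S}_\infty^n$ are transferred onto the Euclidean topology.

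For $\mathcal{S}_p^n$ and $\mathcal{S}_b^n$, the key observation is that the coordinate $x_i$ is frozen at $0$ for $i > n$ on all of $\mathcal{E}^n$, so any factor $U_i$ with $i > n$ in a basic open box $\prod_i U_i \subseteq \mathbb{R}^\infty$ is essentially inert: either $0 \notin U_i$, in which case $\mathcal{E}^n \cap \prod_i U_i = \emptyset$, or $0 \in U_i$, in which case the intersection only constrains coordinates $1, \ldots, n$. In the latter case the set is mapped by $\eta$ to $\bigl(\prod_{i=1}^n U_i\bigr) \cap (\mathbb{R}^n \setminus \{0\})$, which is Euclidean open. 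Conversely, any Euclidean basic open set $\bigl(\prod_{i=1}^n V_i\bigr) \cap (\mathbb{R}^n \setminus \{0\})$ is the $\eta$-image of $\mathcal{E}^n \cap \prod_i U_i$ with $U_i = V_i$ for $i \le n$ and $U_i = \mathbb{R}$ for $i > n$; this choice satisfies the cofiniteness requirement of the product topology as well, so the same argument works verbatim for $\mathcal{S}_p^n$.

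There is no substantive obstacle; the only slightly counterintuitive point is that the box topology $\mathcal{S}_b^n$ does not produce anything finer than the Euclidean topology, but this follows because the tail coordinates of elements of $\mathcal{E}^n$ are identically $0$ and hence offer no extra freedom with which the box topology could distinguish points.
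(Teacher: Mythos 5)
Your proposal is correct and follows essentially the same route as the paper: the same truncation/zero-padding bijection, the observation that the frozen tail coordinates make box and product basic open sets either empty or equivalent to Euclidean boxes on the first $n$ coordinates, and the identification of $\rho_r$ and $\rho_\infty$ with the standard $\ell^r$ and $\ell^\infty$ metrics on $\mathbb{R}^n\setminus\{0\}$. No gaps.
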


\begin{proof}
Let $\alpha:\mathcal{E}^n\to\mathbb{R}^n\setminus\{0\}$ be a map defined as $(x_1,x_2,x_3,\ldots)\mapsto(x_1,x_2,\ldots,x_n)$ and suppose $\beta:\mathbb{R}^n\setminus\{0\}\to\mathcal{E}^n$ be defined as $(y_1,y_2,\ldots,y_n)\mapsto(y_1,y_2,\ldots,y_n,0,0,\ldots)$. Note that the compositions $\alpha\circ\beta$ and $\beta\circ\alpha$ are the identity maps of $\mathbb{R}^n\setminus\{0\}$ and $\mathcal{E}^n$ respectively. Thus, $\alpha$ is a bijective map and $\beta$ is its inverse. Now, it is enough to show that $\alpha$ and $\beta$ are continuous. To do this, we consider the cases as follows:

(1) If $\mathcal{E}^n$ has the topology $\mathcal{S}_b^n$: The sets of the type $\mathcal{E}^n\bigcap\prod_i U_i$, where $U_i$ is open in $\mathbb{R}$ for $i\in\mathbb{Z}^+$, form a basis for the topology $\mathcal{S}_b^n$. Let $U=\mathcal{E}^n\bigcap\prod_i U_i$ be an open set in this basis. If $0\notin U_i$ for some $i>n$, then $U$ is an empty set and hence so is the set $\beta^{-1}(U)$. If $0\in U_i$ for all $i>n$, then $\beta^{-1}(U)=U_1\times U_2\times\cdots\times U_n\setminus\{(0,0,\ldots,0)\}$. In either case, $\beta^{-1}(U)$ is open in $\mathbb{R}^n\setminus\{0\}$. This shows that $\beta$ is continuous. It can be seen that the sets of the type $V_1\times V_2\times\cdots\times V_n\setminus\{(0,0,\ldots,0)\}$, where $V_i$ is open in $\mathbb{R}$ for $i=1,2,\ldots,n$, form a basis for the standard topology of $\mathbb{R}^n\setminus\{0\}$. Suppose $V=V_1\times V_2\times\cdots\times V_n\setminus\{(0,0,\ldots,0)\}$ be an open set in this basis. Then $\alpha^{-1}(V)=\mathcal{E}^n\bigcap\prod_i W_i$, where $W_i=V_i$ for $i\leq n$ and $W_i=\mathbb{R}$ for $i>n$. Thus, $\alpha^{-1}(V)$ is open in $(\mathcal{E}^n,\mathcal{S}_b^n)$. This shows that $\alpha$ is continuous.

(2) If $\mathcal{E}^n$ has the topology $\mathcal{S}_p^n$: By slight modifications in the previous case, it can shown that $\alpha$ and $\beta$ are continuous in the present case.

(3) If $\mathcal{E}^n$ has the topology $\mathcal{S}_r^n$: Open balls of the type $C_r^n(x,\delta)=\left\{z\in\mathcal{E}^n:\rho_r(x,z)<\delta\right\}$, for $x\in\mathcal{E}^n$ and $\delta>0$, form a basis for the topology $\mathcal{S}_r^n$, and the sets of the type 
\begin{equation*}
U_r^n(y,\delta)=\left\{w\in\mathbb{R}^n\setminus\{0\}:\left(\sum_{i=1}^n\left|y_i-w_i\right|^r\right)^{1/r}<\delta\right\}
\end{equation*}
\noindent for $y\in\mathbb{R}^n\setminus\{0\}$ and $\delta>0$, form a basis for the usual topology of $\mathbb{R}^n\setminus\{0\}$. It can be checked that $\beta^{-1}(C_r^n(x,\delta))=U_r^n(\alpha(x),\delta)$ and $\alpha^{-1}(U_r^n(y,\delta))=C_r^n(\beta(y),\delta)$ for $x\in\mathcal{E}^n$, $y\in\mathbb{R}^n\setminus\{0\}$ and $\delta>0$. Thus, $\alpha$ and $\beta$ are continuous.

(4) If $\mathcal{E}^n$ has the topology $\mathcal{S}_\infty^n$: By slight modifications in the previous case, it can be shown that $\alpha$ and $\beta$ are continuous in the present case.
\end{proof}

\begin{proof}[Proof of Theorem \ref{th1}]
The composition $f_n\circ g_n$ is the identity map of $\mathcal{E}^n$, where $f_n$ and $g_n$ be the maps defined in Lemma \ref{lem4}. Also, it can be checked that $h_n(0,\phi)=g_n(f_n(\phi))$ and $h_n(1,\phi)=\phi$ for $\phi\in\mathcal{L}^n$, where $h_n$ is the map defined in Lemma \ref{lem8}. Thus, the map $g_n\circ f_n$ is homotopy equivalent to the identity map of $\mathcal{L}^n$, and hence the spaces $\mathcal{E}^n$ and $\mathcal{L}^n$, with the topologies as in Lemma \ref{lem4}, have the same homotopy type. Since $\mathbb{R}^n\setminus\{0\}$ and $S^{n-1}$ have the same homotopy type, the result follows by Lemma \ref{lem9}.
\end{proof}

\begin{lemma}\label{lem10}
The space $\mathcal{E}$ having any of the topologies $\mathcal{S}_b$, $\mathcal{S}_p$, $\mathcal{S}_r$ (for $r\geq1$) and $\mathcal{S}_\infty$ is contractible.
\end{lemma}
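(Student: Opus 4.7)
The plan is to construct an explicit contraction of $\mathcal{E}$ to the base point $e_1=(1,0,0,\ldots)$, adapting the classical ``shift trick'' that proves $S^\infty$ is contractible. Let $T:\mathcal{E}\to\mathcal{E}$ be the shift defined by $T(x)_1=0$ and $T(x)_i=x_{i-1}$ for $i\geq 2$; it clearly maps $\mathcal{E}$ into itself. Define $K:[0,1]\times\mathcal{E}\to\mathcal{E}$ in two stages by
\begin{equation*}
K(s,x)=\begin{cases}(1-2s)\,x+2s\,T(x), & s\in[0,1/2],\\[2pt](2-2s)\,T(x)+(2s-1)\,e_1, & s\in[1/2,1].\end{cases}
\end{equation*}
The two formulas agree at $s=1/2$ (both equal $T(x)$), so $K$ is well-defined, $K(0,x)=x$, and $K(1,x)=e_1$ for every $x\in\mathcal{E}$.

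The first step is to verify that $K(s,x)$ always lies in $\mathcal{E}$. Finite support is automatic, since the supports of $x$ and $T(x)$ together lie in $\{1,\ldots,N+1\}$ whenever $x$ is supported in $\{1,\ldots,N\}$. For nonvanishing in stage one, if $(1-2s)\,x+2s\,T(x)=0$ then the first coordinate gives $(1-2s)x_1=0$, and induction on the coordinate index forces either $x=0$ or $s=1/2$; at $s=1/2$ the equation reduces to $T(x)=0$, i.e.\ $x=0$, contradicting $x\in\mathcal{E}$. In stage two the first coordinate equals $2s-1$, which is nonzero for $s>1/2$, and at $s=1/2$ the value $T(x)$ is itself nonzero.

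The second step is continuity in each of the four topologies on $\mathcal{E}$. For $\mathcal{S}_p$ this is immediate: every coordinate of $K(s,x)$ is a polynomial in $s$ and in at most two coordinates of $x$, so all coordinate projections of $K$ are continuous and the universal property of the product topology gives continuity of $K$. For $\mathcal{S}_r$ and $\mathcal{S}_\infty$ the shift $T$ is an isometry and linear interpolation of two points by a scalar continuous in $s$ is Lipschitz in the spatial variable with constant at most $1$; a triangle-inequality estimate modelled on the proof of Lemma \ref{lem7}(c)--(d), in which the $s$-dependence is controlled by the finite $\rho_r$- or $\rho_\infty$-norm of $x$ near a fixed $x_0$, then yields continuity of $K$.

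The main obstacle is continuity in the box topology $\mathcal{S}_b$, which I would handle directly. Given a basic open set $U=\mathcal{E}\cap\prod_iU_i$ containing $K(s_0,x_0)$, let $N$ be the largest index where $x_0$ is nonzero, so that $K(s_0,x_0)_i=0\in U_i$ for every $i>N+1$; for each such $i$ pick $\mu_i>0$ with $(-\mu_i,\mu_i)\subseteq U_i$. Because $|K(s,x)_i|$ is, in either stage, bounded by the maximum of $|x_i|$ and $|x_{i-1}|$, constraining $|x_j|$ by $\min(\mu_j,\mu_{j+1})$ for each $j>N$ in the box neighborhood of $x_0$ ensures $K(s,x)_i\in U_i$ for all $i>N+1$. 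On the finite set of remaining coordinates $i\leq N+1$, joint continuity of the real-valued maps $(s,a,b)\mapsto(1-2s)a+2s\,b$ and their stage-two analogues provides a small interval $V\ni s_0$ and a common tolerance $\epsilon>0$ around the $x_{0,j}$ making $K(s,x)_i\in U_i$ for those $i$. Assembling these constraints gives a box neighborhood $W$ of $x_0$ and an open $V\subseteq[0,1]$ with $K(V\times W)\subseteq U$; the only point needing extra care is $s_0=1/2$, where one shrinks $V$ so that the constraints from both stage formulas hold simultaneously, which is possible precisely because the two formulas agree at $s=1/2$.
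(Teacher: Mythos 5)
Your proof is correct and follows essentially the same route as the paper: the paper also contracts $\mathcal{E}$ by first homotoping the identity to the shift map and then the shift to the constant map $(1,0,0,\ldots)$ (keeping the two stages as separate homotopies $S$ and $T$ rather than concatenating them into one map $K$), and it likewise verifies continuity case by case in the four topologies, with the box-topology case handled by the same kind of coordinatewise estimates you sketch. The one point where you genuinely streamline the argument is the product-topology case: your direct appeal to the continuity of the coordinate functions $(s,x)\mapsto K(s,x)_i$ is cleaner than the paper's derivation of $\mathcal{S}_p$-continuity from the already-established $\mathcal{S}_b$ case, and you also make explicit the verification that the homotopy never leaves $\mathcal{E}$ (i.e.\ never hits $0$), which the paper leaves implicit.
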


\begin{proof}
Let $S:\left[0,1\right]\times\mathcal{E}\to\mathcal{E}$ and $T:\left[0,1\right]\times\mathcal{E}\to\mathcal{E}$ be maps defined as $S(s,\left(x_i\right)_i)=\left((1-s)x_i+sx_{i-1}\right)_i$ and $T(s,\left(x_i\right)_i)=\left((1-s)x_{i-1}+sa_i\right)_i$ for $(s,\left(x_i\right)_i)\in\left[0,1\right]\times\mathcal{E}$, where $x_0=0$, $a_1=1$ and $a_i=0$ for $i\geq2$. Note that $S(0,x)=\text{Id}_\mathcal{E}(x)$, $S(1,x)=T(0,x)=S_1(x)$ and $T(1,x)=C_a(x)$ for $x\in\mathcal{E}$, where $\text{Id}_\mathcal{E}$ is the identity map of $\mathcal{E}$, $S_1$ is the shift map $(x_1,x_2,x_3,\ldots)\mapsto(0,x_1,x_2,\ldots)$ and $C_a$ is the constant map $(x_1,x_2,x_3,\ldots)\mapsto(1,0,0,\ldots)$. If we can show that $S$ and $T$ are continuous, then the identity map of $\mathcal{E}$ will be null-homotopic and hence $\mathcal{E}$ will be contractible. To show that $S$ and $T$ are continuous, we consider the cases as follows:

(1) If $\mathcal{E}$ has the topology $\mathcal{S}_b$:

(a) To prove $S$ is continuous: Let $U=\mathcal{E}\bigcap\prod_i U_i$ be a basic open set in $(\mathcal{E},\mathcal{S}_b)$. We show that $S^{-1}(U)$ is open in $\left[0,1\right]\times\mathcal{E}$. Let $(s,\left(x_i\right)_i)\in S^{-1}(U)$ be given. It is enough to show that $(s,\left(x_i\right)_i)\in V\times W\subseteq S^{-1}(U)$ for some open set $V$ in $\left[0,1\right]$ and an open set $W$ in $\mathcal{E}$. Let $m\geq2$ be such that $x_i=0$ for all $i\geq m$ and let $M=\max\left\{\left|x_i\right|:i<m\right\}$. Since $S(s,\left(x_i\right)_i)\in U$, $(1-s)x_i+sx_{i-1}\in U_i$ for $i\in\mathbb{Z}^+$. Choose an $\epsilon>0$ such that $\left((1-s)x_i+sx_{i-1}-\epsilon,\hskip0.2mm(1-s)x_i+sx_{i-1}+\epsilon\right)\subseteq U_i$ for all $i\leq m$. Since $x_i=x_{i-1}=0$ for $i>m$, $0\in U_i$ for all $i>m$. For $i>m$, choose an $\epsilon_i>0$ such that $(-\epsilon_i,\epsilon_i)\subseteq U_i$. Let $\delta=\min\{\frac{\epsilon}{4},\frac{\epsilon_{m+1}}{3}\}$ and $\delta_i=\min\{\frac{\epsilon_i}{3},\frac{\epsilon_{i+1}}{3}\}$ for $i>m$. Take $V=\left(s-\delta/M,\hskip0.2mm s+\delta/M\right)\cap\left[0,1\right]$ and $W=\mathcal{E}\bigcap\prod_i W_i$, where $W_i=(x_i-\delta,x_i+\delta)$ for $i\leq m$ and $W_i=(-\delta_i,\delta_i)$ for $i>m$. Note that $(s,\left(x_i\right)_i)\in V\times W$. Let $(t,\left(y_i\right)_i)\in V\times W$ be given. Since $\left|s-t\right|<\delta/M$ and $\left|x_i-y_i\right|<\delta$ for $i\leq m$, we have
\begin{align}
\left|(1-s)x_i+sx_{i-1}-(1-t)y_i-ty_{i-1}\right|&\leq\left|(1-s)-(1-t)\right|\left|x_i\right|+\left|s-t\right|\left|x_{i-1}\right|\nonumber\\
&\hskip4.1mm+(1-t)\left|x_i-y_i\right|+t\left|x_{i-1}-y_{i-1}\right|\nonumber\\
&\leq\frac{\delta}{M}M+\frac{\delta}{M}M+(1-t)\delta+t\delta\nonumber\\
&<\epsilon
\end{align}
\noindent for $i\leq m$. Thus, $(1-t)y_i+ty_{i-1}\in U_i$ for $i\leq m$. Since $x_m=0$, $\left|x_m-y_m\right|<\delta$ and $\left|y_i\right|<\delta_i$ for $i>m$, we have $\left|(1-t)y_{m+1}+ty_m\right|\leq\left|y_{m+1}\right|+\left|x_m-y_m\right|\leq\delta_{m+1}+\delta<\epsilon_{m+1}$ and $\left|(1-t)y_i+ty_{i-1}\right|\leq\left|y_i\right|+\left|y_{i-1}\right|\leq\delta_i+\delta_{i-1}<\epsilon_i$ for $i\geq m+2$. In other words $(1-t)y_i+ty_{i-1}\in U_i$ for $i>m$. Therefore, $S(t,\left(y_i\right)_i)=\left((1-t)y_i+ty_{i-1}\right)_i\in U$, i.e. $(t,\left(y_i\right)_i)\in S^{-1}(U)$. This shows that $V\times W\subseteq S^{-1}(U)$.

(b) To prove $T$ is continuous: Let $U=\mathcal{E}\bigcap\prod_i U_i$ be a basic open set in $(\mathcal{E},\mathcal{S}_b)$. We show that $T^{-1}(U)$ is open in $\left[0,1\right]\times\mathcal{E}$. Let $(s,\left(x_i\right)_i)\in T^{-1}(U)$ be given. It is enough to show that $(s,\left(x_i\right)_i)\in V\times W\subseteq T^{-1}(U)$ for an open set $V$ in $\left[0,1\right]$ and an open set $W$ in $\mathcal{E}$. Let $m\geq2$ be such that $x_i=0$ for all $i\geq m$ and let $M=\max\left\{\left|x_i\right|:i<m\right\}$. Since $T(s,\left(x_i\right)_i)\in U$, $(1-s)x_{i-1}+sa_i\in U_i$ for $i\in\mathbb{Z}^+$. Choose an $\epsilon>0$ such that $\left((1-s)x_{i-1}+sa_i-\epsilon,\hskip0.2mm(1-s)x_{i-1}+sa_i+\epsilon\right)\subseteq U_i$ for all $i\leq m$. Since $x_{i-1}=a_i=0$ for $i>m$, $0\in U_i$ for all $i>m$. For $i>m$, choose an $\epsilon_i>0$ such that $(-\epsilon_i,\epsilon_i)\subseteq U_i$. Let $\delta=\min\{\frac{\epsilon}{3},\frac{\epsilon_{m+1}}{2}\}$ and $\delta_i=\frac{\epsilon_{i+1}}{2}$ for $i>m$. Take $V=\left(s-\frac{\delta}{M+1},\hskip0.2mm s+\frac{\delta}{M+1}\right)\cap\left[0,1\right]$ and $W=\mathcal{E}\bigcap\prod_i W_i$\,, where $W_i=(x_i-\delta,x_i+\delta)$ for $i\leq m$ and $W_i=(-\delta_i,\delta_i)$ for $i>m$. Note that $(s,\left(x_i\right)_i)\in V\times W$. Let $(t,\left(y_i\right)_i)\in V\times W$ be given. Since $\left|s-t\right|<\frac{\delta}{M+1}$ and $\left|x_i-y_i\right|<\delta$ for $i\leq m$, we have
\begin{align}
\left|(1-s)x_{i-1}+sa_i-(1-t)y_{i-1}-ta_i\right|&\leq\left|(1-s)-(1-t)\right|\left|x_{i-1}\right|+(1-t)\left|x_{i-1}-y_{i-1}\right|\nonumber\\
&\hskip4mm+\left|s-t\right|\left|a_i\right|\nonumber\\
&\leq\frac{\delta}{M+1}M+\delta+\frac{\delta}{M+1}\nonumber\\
&<\epsilon
\end{align}
\noindent for $i\leq m$. Thus, $(1-t)y_{i-1}+ta_i\in U_i$ for $i\leq m$. Since $x_m=0$, $\left|x_m-y_m\right|<\delta$, $a_i=0$ and $\left|y_i\right|<\delta_i$ for $i>m$, we have $\left|(1-t)y_m+ta_{m+1}\right|\leq\left|x_m-y_m\right|<\epsilon_{m+1}$ and $\left|(1-t)y_{i-1}+ta_i\right|\leq\delta_{i-1}<\epsilon_i$ for $i\geq m+2$. In other words $(1-t)y_{i-1}+ta_i\in(-\epsilon_i,\epsilon_i)\subseteq U_i$ for $i>m$. Therefore, $T(t,\left(y_i\right)_i)=\left((1-t)y_{i-1}+ta_i\right)_i\in U$, i.e. $(t,\left(y_i\right)_i)\in T^{-1}(U)$. This proves that $V\times W\subseteq T^{-1}(U)$.

(2) If $\mathcal{E}$ has the topology $\mathcal{S}_p$: 

(a) To prove $S$ is continuous: Let $U=\mathcal{E}\bigcap\prod_i U_i$ be a basic open set in $(\mathcal{E},\mathcal{S}_p)$. We show that $S^{-1}(U)$ is open in $\left[0,1\right]\times\mathcal{E}$. Since $U$ is also open in $(\mathcal{E},\mathcal{S}_b)$, $S^{-1}(U)$ is open in $\left[0,1\right]\times\mathcal{E}$ with respect to the topology $\mathcal{S}_b$ on $\mathcal{E}$. Thus, we have $S^{-1}(U)=\bigcup_jV_j\times W_j$, where $j$ runs over some indexing set $J$, and for $j\in J$, $V_j$ is a basic open set in $\left[0,1\right]$ and $W_j=\mathcal{E}\bigcap\prod_iW_{ji}$ is a basic open set in $(\mathcal{E},\mathcal{S}_b)$. We assume that $V_j $ and $W_j$ are nonempty for all $j\in J$. The set $W_{ji}$ is nonempty for all $(i,j)\in\mathbb{Z}^+\times J$, and for $j\in J$, $0\in W_{ji}$ for all but finitely many $i\in\mathbb{Z}^+$. Since $U_i=\mathbb{R}$ for all but finitely many $i\in\mathbb{Z}^+$, we can find an integer $m$ such that $U_i=\mathbb{R}$ for all $i>m$. For $j\in J$, let $W'_j=\mathcal{E}\bigcap\prod_iW'_{ji}$, where $W'_{ji}=W_{ji}$ for $i\leq m$ and $W'_{ji}=\mathbb{R}$ for $i>m$. Note that $W'_j$ is a basic open set in $(\mathcal{E},\mathcal{S}_p)$ for each $j\in J$. It is enough to prove that $S^{-1}(U)=\bigcup_jV_j\times W'_j$. We note that $S^{-1}(U)\subseteq\bigcup_jV_j\times W'_j$, since $W_j\subseteq W'_j$ for $j\in J$. Suppose $(s,\left(x_i\right)_i)\in\bigcup_jV_j\times W'_j$. Then $(s,\left(x_i\right)_i)\in V_j\times W'_j$ for some $j\in J$ and hence $x_i\in W'_{ji}=W_{ji}$ for $i\leq m$. Let $y_i=x_i$ for $i\leq m$. Since $W_{ji}$ is nonempty for $i>m$ and $0\in W_{ji}$ for all but finitely many $i>m$, we can choose $y_i\in W_{ji}$ for each $i>m$ such that at least one of the $y_i$ is nonzero and $y_i=0$ for all but finitely many $i>m$. It is clear that $(s,\left(y_i\right)_i)\in V_j\times W_j\subseteq S^{-1}(U)$. In other words $S(s,\left(y_i\right)_i)\in U$. Thus, $(1-s)x_i+sx_{i-1}=(1-s)y_i+sy_{i-1}\in U_i$ for $i\leq m$, since $y_i=x_i$ for $i\leq m$ and $S(s,\left(y_i\right)_i)=\left((1-s)y_i+sy_{i-1}\right)_i$. Note that $(1-s)x_i+sx_{i-1}\in U_i$ for $i>m$ (since $U_i=\mathbb{R}$ for $i>m$). Therefore, $S(s,\left(x_i\right)_i)\in U$, i.e. $(s,\left(x_i\right)_i)\in S^{-1}(U)$. This shows that $\bigcup_jV_j\times W'_j\subseteq S^{-1}(U)$.

(b) The continuity of $T$ can be proved in a similar way as in the case of $S$.

(3) If $\mathcal{E}$ has the topology $\mathcal{S}_r$:

(a) To prove $S$ is continuous: Let $(s,\left(x_i\right)_i)\in\left[0,1\right]\times\mathcal{E}$. We show that $S$ is continuous at $(s,\left(x_i\right)_i)$. Suppose an $\epsilon>0$ be given. Take $\delta=\frac{\epsilon}{3}$ and $\delta'=\frac{\epsilon}{6R}$\,, where $R=\left(\sum_i\left|x_i\right|^r\right)^{1/r}$. We have the following:
\begin{align}
\rho_r(S(s,\left(x_i\right)_i),S(t,\left(y_i\right)_i))&\leq\left(\sum_i\left|(1-s)x_i+sx_{i-1}-(1-t)x_i-tx_{i-1}\right|^r\right)^{1/r}\nonumber\\
&\hskip4.5mm+\left(\sum_i\left|(1-t)x_i+tx_{i-1}-(1-t)y_i-ty_{i-1}\right|^r\right)^{1/r}\nonumber\\
&\leq2\left|s-t\right|\left(\sum_i\left|x_i\right|^r\right)^{1/r}+(1-t+t)\left(\sum_i\left|x_i-y_i\right|^r\right)^{1/r}\nonumber\\
&\leq2\hskip0.2mm\delta'R+\delta\nonumber\\
&<\epsilon
\end{align}
\noindent whenever $\left|s-t\right|<\delta'$ and $\rho_r(\left(x_i\right)_i,\left(y_i\right)_i)<\delta$. This proves that $S$ is continuous at $(s,\left(x_i\right)_i)$.

(b) To prove $T$ is continuous: Let $(s,\left(x_i\right)_i)\in\left[0,1\right]\times\mathcal{E}$. We show that $T$ is continuous at $(s,\left(x_i\right)_i)$. Suppose an $\epsilon>0$ be given. Take $\delta=\frac{\epsilon}{3}$ and $\delta'=\frac{\epsilon}{3+3R}$\,, where $R=\left(\sum_i\left|x_i\right|^r\right)^{1/r}$. We have the following:
\begin{align}
\rho_r(T(s,\left(x_i\right)_i),T(t,\left(y_i\right)_i))&\leq\left(\sum_i\left|(1-s)x_{i-1}+sa_i-(1-t)y_{i-1}-ta_i\right|^r\right)^{1/r}\nonumber\\
&\leq\left(\sum_i\left|s-t\right|^r\left|a_i\right|^r\right)^{1/r}+\left(\sum_i\left|(1-s)-(1-t)\right|^r\left|x_{i-1}\right|^r\right)^{1/r}\nonumber\\
&\hskip4.5mm+\left(\sum_i(1-t)^r\left|x_{i-1}-y_{i-1}\right|^r\right)^{1/r}\nonumber\\
&\leq\delta'+\delta'R+\delta\nonumber\\
&<\epsilon
\end{align}
\noindent whenever $\left|s-t\right|<\delta'$ and $\rho_r(\left(x_i\right)_i,\left(y_i\right)_i)<\delta$. This proves that $T$ is continuous at $(s,\left(x_i\right)_i)$.

(4) If $\mathcal{E}$ has the topology $\mathcal{S}_\infty$: By sight modifications in the previous case, it can shown that $S$ and $T$ are continuous in the present case. 
\end{proof}

\begin{proof}[Proof of Theorem \ref{th2}]
The composition $f\circ g$ is the identity map of $\mathcal{E}$, where $f$ and $g$ be the maps defined in Lemma \ref{lem3}. Also, it can be seen that $h(0,\phi)=g(f(\phi))$ and $h(1,\phi)=\phi$ for $\phi\in\mathcal{L}$, where $h$ is the map defined in Lemma \ref{lem7}. Thus, the map $g\circ f$ is homotopy equivalent to the identity map of $\mathcal{L}$, and hence the spaces $\mathcal{E}$ and $\mathcal{L}$, with the topologies as in Lemma \ref{lem3}, have the same homotopy type. Now, the result follows by Lemma \ref{lem10}.
\end{proof}

\section*{Acknowledgments}

The author is thankful to the Harish-Chandra Research Institute, Prayagraj, India for the postdoctoral fellowship during this research work.

\end{document}